\documentclass[journal,twoside,web]{ieeecolor}
\usepackage{generic}
\usepackage{cite}
\usepackage{amsmath,amssymb,amsfonts}
\usepackage{algorithmic}
\usepackage{graphicx}
\usepackage{algorithm,algorithmic}
\usepackage{hyperref}
\hypersetup{hidelinks=true}
\usepackage{textcomp}
\usepackage{subfig}
\usepackage{booktabs}
\newtheorem{assm}{Assumption}

\newtheorem{lema}{Lemma}
\newtheorem{theorem}{Theorem}

\def\BibTeX{{\rm B\kern-.05em{\sc i\kern-.025em b}\kern-.08em
		T\kern-.1667em\lower.7ex\hbox{E}\kern-.125emX}}
\begin{document}

	\title{Switched Event-Triggered Adaptive  Control of Reaction-Diffusion PDE-ODE with Neural Operator Implementation}
	\author{Hongpeng Yuan, \IEEEmembership{Student Member, IEEE}, Ji Wang,\IEEEmembership{Member, IEEE} and Mamadou Diagne, \IEEEmembership{Senior Member, IEEE} 
		\thanks{Hongpeng Yuan and Ji Wang were supported by the National Natural Science Foundation of
China under Grant 62203372. The work of Mamadou Diagne was funded by the NSF CAREER Award CMMI-2302030 and  the NSF grant CMMI-2222250. The material in this paper was partially presented at The 64th IEEE Conference on Decision and Control(CDC), December 10--12, 2025, Rio de Janeiro, Brazil. {\it (Corresponding author: Mamadou Diagne.)}}
		\thanks{}
	}
	\maketitle

	\begin{abstract}

 This paper develops a switched event-triggered adaptive boundary control for a class of reaction-diffusion PDE-ODE cascade systems, where the system and input matrices in the ODE as well as the spatially-varying reaction coefficient in the PDE are uncertain. A two-step backstepping transformation is constructed to derive the continuous-time control law. Then a novel dynamic event-triggered control strategy for the PDE-ODE cascade is proposed based on a switched event-triggering mechanism, ensuring global exponential stability of the closed-loop system in place of the exponential convergence commonly achieved with backstepping-based classical dynamic ETC, while inherently excluding Zeno behavior. To address the uncertainties in the PDE-ODE cascade, adaptive update laws are developed, \textcolor{black}{leading to time-varying gain kernels that are adaptively scheduled through the event-triggered control mechanism. Furthermore,} to facilitate efficient real-time implementation, deep neural operators (DeepONet) are employed to approximate the backstepping kernels as mappings from the estimated parameters to kernel functions, thereby eliminating the need to repeatedly solve kernel PDEs online. Through a Lyapunov analysis that incorporates the effects of the event-triggering mechanism, parameter adaptation, and kernel approximation errors, we prove the $L^2$ global asymptotic regulation of the resulting closed-loop system. In summary, the key contributions of the paper are threefold: (i) developing an adaptive DeepONet-based framework for  reaction-diffusion PDE-ODE cascade systems; (ii) extending the existing adaptive event-triggered control design for reaction-diffusion PDEs to the case with more complex uncertainties; and (iii) generalizing switched dynamic ETC with global exponential stability to PDE-ODE cascades. The effectiveness of the proposed approach is demonstrated through numerical simulations.
	\end{abstract}
	
	\begin{IEEEkeywords}
		Reaction-diffusion PDE; PDE-ODE cascade; boundary control; backstepping; adaptive control; event-triggered control; neural operator.
	\end{IEEEkeywords}

	\section{Introduction}\label{sec:Intro}
    \subsection{State-of the-art}
	\textbf{Boundary Control of Reaction-Diffusion PDEs and PDE-ODE cascades.}	
    Many engineering systems exhibit spatially distributed dynamics that are naturally modeled by partial differential equations (PDEs), making boundary control a fundamental tool for stabilization and regulation.
	 Among the broad class of PDE models, reaction-diffusion equations occupy a prominent place due to their ability to capture the interplay between spatial transport and local reaction dynamics. Such models arise naturally in a wide range of applications, including biology, ecology, physics, and chemistry (see e.g. \cite{article3}, \cite{bbook1}, \cite{article1}, \cite{article2}, \cite{article4}).  Important advances have been achieved in the control of reaction-diffusion PDEs, leading to a rich body of results on stabilization, estimation, and output regulation. In many practical settings, boundary actuation remains the most realistic and cost-effective mechanism for influencing the distributed dynamics.  
     
    The introduction of PDE backstepping represented a major advance in the control of distributed parameter systems by providing a systematic framework for boundary feedback design and stability analysis. Through invertible Volterra-type transformations, the method enables the explicit construction of stabilizing controllers and Lyapunov functionals \cite{doi:10.1137/1.9780898718607}. Since then, PDE backstepping has evolved into a powerful and versatile methodology capable of addressing increasingly complex settings, including \textcolor{black}{coupled linear reaction-diffusion systems \cite{Baccoli2015}, with linear actuator dynamics  \cite{Deutscher2020},  or scalar  nonlinear reaction-diffusion systems \cite{9580683} or their linear counterpart with nonlinear actuator dynamics \cite{Li2021}, among others. The framework has also proven highly effective for PDE–ODE cascade systems  including heat equations \cite{ANTONIOSUSTO2010284}, reaction-diffusion equations \cite{9978722}  and even  chains of PDE-ODE systems \cite{XU2023110763}.}

	\textbf{Event-Triggered Boundary Control of PDEs.}
	Event-triggered control (ETC) has been proposed as an effective mechanism to reduce communication and actuation updates by replacing periodic sampling with state-dependent triggering rules, e.g., \cite{6069816}, \cite{6310015}, \cite{6334427}, \cite{6924753}, \cite{10258286} \cite{9031377}. Originally studied for finite-dimensional systems, ETC has been extended to infinite-dimensional systems governed by PDEs.   Existing works have investigated ETC for various PDE systems under both state-feedback \cite{81107240}, \cite{ESPITIA20211095621}, \cite{9735290},  \cite{9978722}, and output-feedback settings   \cite{Espitiaoutput}, \cite{9477051},  \cite{YUAN2025112266}, \cite{Rathnayake2022Sampled}, \cite{9319184}, with applications including traffic flow control \cite{Espitiatraffic}, re-entrant manufacturing systems \cite{9483202}, and Stefan problems \cite{10156551}, \cite{Rathnayake2023Observer}. \textcolor{black}{Furthermore,} event-based control strategies  for PDEs have   been explored, including  periodic and self-triggered mechanisms  \cite{10572287}, \cite{SOMATHILAKE2025112433}, and performance-barrier event-triggered  control approaches    \cite{rathnayake2025performance}, \cite{10909456}.  Despite these advances, most existing results focus on ensuring exponential convergence while avoiding Zeno behavior. Results guaranteeing stronger stability properties, particularly for more complex systems such as PDE–ODE cascades, remain limited. Recently, global exponential stability (GES) results for ETC of PDEs have been established for both parabolic systems \cite{101093imamcidnaf042} and hyperbolic systems \cite{RATHNAYAKE2026112617}. 	 Besides PDE backstepping design,  observer-based modal decomposition methods for dynamic ETC of reaction-diffusion  PDEs are exploited in \cite{katz2020boundary, lhachemi2024event}. However, fewer results are available on PETC and/or STC \cite{selivanov2016distributed,wakaiki2020event,wakaiki2019stability}.
	
    
	\textbf{ Adaptive Boundary Control of Reaction–Diffusion PDEs.} Adaptive boundary control within the backstepping framework has been widely studied for  reaction–diffusion PDEs \cite{9978722}, \cite{4623267},   \cite{Li2021}, \cite{li2020adaptive} \cite{smyshlyaev2010adaptive,wang2021adaptive,wang2021delay,wang2024delay,wang2025adaptive}.   Recently, adaptive boundary control via neural operator-approximated gain kernels of a reaction-diffusion system with unknown spatially-varying reaction coefficient, was considered in  \cite{BHAN2025105968}.  
	For PDE–ODE cascade systems, event-triggered adaptive boundary control has also been considered in \cite{9978722}, where uncertainties arise in both the spatially invariant reaction coefficient of the PDE subsystem and the parameters of a scalar ODE.

	\textbf{Neural Operators for Backstepping Boundary Control of PDEs.} Neural operators have recently emerged as a powerful class of learning-based models for approximating mappings between infinite-dimensional function spaces, e.g., \cite{hornik1989multilayer}, \cite{chen1995universal}, \cite{lu2021learning}.	\textcolor{black}{Recent studies have explored neural network-based approximations of backstepping kernels. Primarily in settings with known function parameters \cite{krstic2024neural}, \cite{wang2025deep}, \cite{WANG2025112351}, which do not address adaptive or event-triggered implementations.}	In the context of adaptive boundary control, neural operators provide a promising approach to approximate parameter-dependent backstepping kernels, thereby avoiding repeated online solutions of kernel PDEs and significantly reducing computational complexity (see e.g. \cite{BHAN2025105968}, \cite{LAMARQUE2025112329}, \cite{LV2025112553}).

	\subsection{ Contributions}

    This paper addresses the boundary control for a class of uncertain reaction-diffusion PDE-ODE cascade systems and develops an event-triggered implementation with piecewise-constant control inputs. The PDE subsystem includes a spatially-varying reaction term that modifies the diffusion dynamics of the heat equation and may induce open-loop instability. The overall system is subject to heterogeneous boundary and interconnection conditions, comprising a Neumann boundary condition at the left endpoint, a Robin-type boundary control at the right endpoint, and a Dirichlet interconnection at the PDE-ODE interface. The high-order ODE subsystem is allowed to be unstable while remaining controllable. In addition, a deep neural operator (DeepONet) of the designed controller is implemented with provable stability result and arbitrary level of accuracy. The contributions of the present work are threefold.
    
    \begin{itemize}
     \item   \textbf{Generalization of adaptive DeepONet to complex RD PDE-ODE cascade systems.} Compared to \cite{BHAN2025105968}, which develops an adaptive DeepONet framework for reaction-diffusion PDEs with unknown spatially-varying reactivity and boundary coefficients, we address a more intricate system involving PDE–ODE cascade structures. In particular, both the spatially-varying reaction coefficient in the PDE subsystem and the system and input matrices of the higher-order ODE subsystem are assumed to be uncertain, leading to  \textcolor{black}{different parameter estimates} and control problem.

Furthermore, unlike \cite{BHAN2025105968}, the present work incorporates a switched dynamic event-triggered implementation, which introduces additional complexity into the adaptive law design. Specifically, the proposed update laws must simultaneously compensate for the unknown distributed reaction dynamics in the PDE actuation channel and the linearly parameterized system matrix and input vector of the ODE plant subsystem under discontinuous information exchange. These features collectively give rise to \textcolor{black}{challenges} in both controller synthesis and closed-loop stability analysis.

To the best of our knowledge, this is the first result establishing $L^2$-norm asymptotic regulation for a   uncertain PDE-ODE cascade system  under an event-triggered adaptive control framework \textcolor{black}{while} relying on neural-operator approximation of the closed-loop dynamics.

\item \textbf{\textcolor{black}{Generalization of  event-triggered adaptive control of reaction-diffusion PDEs.}} As the DeepONet architecture is employed to accelerate the computation of the controller gains, the proposed framework inherently extends  event-triggered adaptive control  of reaction-diffusion PDEs to a broader class of \textcolor{black}{ PDE-ODE cascade.} To the best of our knowledge, only the scalar counterpart of this problem has been addressed in \cite{9978722}, where the PDE subsystem involves a constant reaction coefficient and the ODE subsystem is scalar. The approach in \cite{9978722} relies on event-triggered Batch Least-Squares Identifiers (BaLSI), enabling exact and finite-time parameter identification together with exponential regulation of both the plant and actuator dynamics.

In the present work, however, we adopt a Lyapunov-based adaptive design due to the spatially-varying nature of the reaction coefficient, since the development of a BaLSI framework for PDEs with spatially distributed coefficients remains an open problem. Although the proposed Lyapunov-based approach neither guarantees exact and finite-time parameter identification nor exponential regulation--which are not the primary objectives of adaptive control--it achieves asymptotic regulation while generalizing the event-triggered adaptive boundary control framework of \cite{9978722} to systems with spatially-varying uncertainties in the PDE subsystem and simultaneous uncertainties in both the system and input matrices of high-order ODE subsystems.

It is worth emphasizing that event-triggered adaptive designs accommodating spatially-varying coefficients \textcolor{black}{remain unreported for hyperbolic PDE systems,} as evidenced by the existing literature represented by \cite{wang2022pde}.

\item \textbf{Generalization of  switched dynamic ETC with global exponential stability to PDE-ODE cascades.} In the present work, we obtain the first switched dynamic event-triggered controller for PDE–ODE cascade systems with spatially-varying coefficients,  broadening the results in \cite{101093imamcidnaf042}. Our design establishes a stronger stability result, namely, a global exponential stability of the closed-loop system, rather than the global exponential convergence typically obtained    for PDE-ODE cascade system based on PDE backstepping.

    \end{itemize}
 In summary, Our design involves three gain kernels, among which only the implicit and computationally demanding kernel is approximated through the DeepONet architecture. A preliminary version addressing switched and dynamic event-triggered adaptive control of reaction-diffusion PDEs with an unknown spatially-varying reaction coefficient is presented in our conference paper \cite{11312433}. In sharp contrast to \cite{11312433}, the present work considers a substantially more challenging PDE-ODE cascade configuration in which the high-order ODE subsystem is also subject to parametric uncertainty.

    The remainder of the paper is organized as follows. The problem formulation and the nominal continuous-in-time controller design are presented in Sections \ref{sec2} and \ref{sec3}, respectively. Section \ref{sec4} develops the nominal switched dynamic event-triggered control law. The Lyapunov-based adaptive control framework and its DeepONet-based approximation are presented in Section \ref{sec5}. Simulation results are provided in Section \ref{sec6}, followed by concluding remarks and future research directions in Section \ref{sec7}.

	\subsection{Notation}
	We adopt the following notation.
	\begin{enumerate}
		\item   $C^0(\Pi)$ and  $C^1(\Pi)$ are
		the usual notation for continuous and continuously differentiable
		functions (on the set  $\Pi$), respectively. By $C^k (U;\Omega)$, where $k \geq 1$, we denote the class of continuous functions on $U$, which takes values in $\Omega$ and has continuous	derivatives of order $k$ on $U$.  For scenarios where the function has multiple arguments, i.e., $f(x, y, t)$, we use $C_{x, y}^2 C_t^k$ to indicate the function has continuous second derivatives in $x$ and $y$, but only derivatives of order $k$ with respect to $t$.  
		
		\item The symbol $\mathbb{N}$ denotes the set of natural numbers including zero, and the notation $\mathbb{N}^*$ for the set $\{1,2, \cdots\}$, i.e., the natural numbers without 0. Let  $\mathbb{R}$ be the set of real numbers, and $\mathbb{R}_{+}:=[0,+\infty)$ represents the set of non-negative real numbers.
		
		\item  The partial derivatives and total derivatives are denoted as: $f_x(x, t)=\frac{\partial f}{\partial x}(x, t), f_t(x, t)=\frac{\partial f}{\partial t}(x, t)$, $f^{\prime}(x)=\frac{d f(x)}{d x}, \dot{f}(t)=\frac{d f(t)}{d t}$.
		
			\item  We use the notation $f[t]$ (e.g., $u,v$) to denote the profile of $f$ at certain $t \geq 0$, i.e., $(f[t])(x)=$ $f(x, t)$ for all $x\in [0,1]$.
			
			\item We use $\|  	\cdot \|_{\infty}$ for the infinity-norm.  Given an open set $\Omega$, $L^2(\Omega)=\{u: \Omega \rightarrow \mathbb{R} ; \int_{\Omega}|u(x)|^2 d x$ 
			$<+\infty\}$ 	endowed with the norm:
			$		\|u\|^2=\langle u, u\rangle=\int_{\Omega}|u(x)|^2 d x.$ The single bars $|\cdot|$ for $X(t)$ denote the Euclidean norm if $X(t) \in \mathbb{R}^n$.
	\end{enumerate}
	
	\section{Problem Formulation} \label{sec2}
	Consider the following plant
	\begin{align}
		& \dot{X}(t) = A(\theta) X(t) +B(\theta) u (0,t) , \label{0} \\
		& u_t(x, t)= u_{x x}(x, t)+\lambda(x) u(x, t), \label{1}\\
		& u_x(0, t)=0,\label{2} \\
		& u_x(1, t)+qu(1,t)= U(t) \label{3},
	\end{align}
	$\forall ( x,t) \in [0,1]\times[0,\infty)$, where $U(t)$ is the control input to be designed, $u(x,t)$ is the state of the reaction-diffusion PDE and $X(t) = [x_1,x_2,...,x_n]^T$ is the state of the ODE. The system matrix
	$A(\theta)$ and the input vector $B(\theta )$ are linearly parametrized, i.e.,
	\begin{align}
		&	A(\theta) = A_0 + \sum_{i=1}^{p} \theta_i A_i, \\
		& B(\theta) = B_0 + \sum_{i=1}^{p} \theta_i B_i,	\label{eq:B}	
	\end{align}
	and $\theta$ is an unknown but constant parameter vector that belongs to the convex set
	\begin{align}
		\Pi=\left\{\theta \in \mathbb{R}^p \mid  \mathcal{P}(\theta) \leq 0\right\} \label{Pithe}
	\end{align}
	where, by assuming that the convex function $\mathcal{P}: \mathbb{R}^p \rightarrow \mathbb{R}$ is smooth, we assure that the boundary $\partial \Pi$ of $\Pi$ is smooth.
	Further, $ \lambda(x)  : [0,1] \rightarrow \mathbb{R} $ is an unknown, spatially-varying coefficient function.
	
	To make stabilization  possible in the presence of unknown parameters, we make some assumptions and illustrate  that these assumptions are reasonable. 
	\begin{assm} \label{Assm1}
		The set ${\Pi}$ is bounded and known. 	There exists a constant $\bar{\lambda}>0$ such that $\|\lambda(x)\|_{\infty} \leq \bar{\lambda}$. 
	\end{assm}
	
	Assumption \ref{Assm1} about the known bounds of the unknown parameters is standard in the adaptive control  literature.
	\begin{assm} 
		We assume that the pair $(A(\theta), B(\theta))$ is completely controllable for each $\theta$. Furthermore, we assume that there exists a triple of vector/matrix-valued functions $(K(\theta), P(\theta), Q(\theta))$ such that $K \in C^1(\Pi), P \in C^1(\Pi), Q \in C^0(\Pi)$, the matrices $P(\theta)$ and $Q(\theta)$ are positive definite and symmetric, and the following Lyapunov equation is satisfied for all $\theta \in \Pi$ :
		\begin{align}
			P(\theta)(A+B K)(\theta)+(A+B K)(\theta)^{\mathrm{T}} P(\theta)=-Q(\theta) . \label{PQmatrix}
		\end{align}
	\end{assm}

 \begin{assm}\label{Assumption3} There exist known positive constants $\underline{\lambda_{\min}}$ and $\overline{\lambda_{\max}}$ such that \[ 
	\begin{aligned}
		&\min\left\{ \lambda_{\min}(P(\theta)), \lambda_{\min}(Q(\theta)) \right\} \geq \underline{\lambda_{\min}}, \\ 
		&\lambda_{\max}(P(\theta)) \leq \overline{\lambda_{\max}},
	\end{aligned}
	 \] for all \(\theta\in\Pi\). \end{assm}
    
	\begin{assm} \label{Assumption2}
		The parameters $q >0, \lambda \in C^2\left([0,1] ; \mathbb{R}_{+}\right)$ satisfy the following relation:
		\begin{align}
			q>\frac{\lambda_{\max }}{2  }+\frac{1}{2}, \label{qlt}
		\end{align}
		where
		$
		\lambda_{\max } \triangleq \max _{x \in[0,1]} \lambda(x) .$
	\end{assm} 
	According to Assumption \ref{Assumption2}, we can avoid using
	the signal $ u(1, t)$ in the nominal control law. Such avoidance is
	crucial for dynamic ETC design due to the challenges associated
	with obtaining a meaningful bound on the rate of change of $u(1, t)$.
	\section{Nominal continuous-in-time Control Design } \label{sec3}
	To systematically construct a stabilizing boundary feedback law, we employ a backstepping transformation that maps the original system into a target system with desirable stability properties.
	
	Consider the invertible backstepping transformation
	\begin{align}
		{w}(x, t)= &  {u}(x, t)-\int_0^x k(x, y)  {u}(y, t) \mathrm{d} y - \gamma(x) X(t), \label{wu}  
	\end{align}
	where $  \gamma(x )$  is the gain kernel   given by
	\begin{align}
		\textcolor{black}{\gamma(x)}=	& 
		K(\theta)\left[\begin{array}{ll}
			I & 0
		\end{array}\right] e^{\begin{array}{ll}
				{\left[\begin{array}{ll}
						0 & (A+BK)(\theta)\\
						I & 0
					\end{array}\right] x}
		\end{array} }
		\left[\begin{array}{l}
			I \\
			0
		\end{array}\right]\label{eq:gamma}
	\end{align}
	where the nominal gain $K$ is chosen to make $A+BK$ Hurwitz and $k(x,y)$ needs to satisfy
	\begin{align}
		&  k_{xx}(x,y)-  k_{yy}(x,y)-\lambda(y) k(x,y)=0,\label{k1} \\
		&k_y(x,0)=0,\\
		& k(x,x)=-\frac{1}{2} \int_0^x \lambda(y) dy.\label{k3}
	\end{align}
	The kernel equations \eqref{k1}--\eqref{k3} admit a unique solution on
	the triangular domain $\mathcal{T}={0 \leq y \leq x \leq 1}$ \cite{rathnayake2025performance}. Applying the backstepping transformation \eqref{wu} into the original system \eqref{0}--\eqref{3}  we arrive at the following system:
	\begin{align}
		\dot{X}(t)  = &  (A+BK) (\theta) X(t) + B(\theta) w (0,t),\label{wq1} \\
		{w}_t(x, t)=&   {w}_{x x}(x, t)-\gamma(x) B(\theta) w (0,t)  , \label{12} \\
		{w}_x(0, t)=&0\label{13}, \\
		{w}_x(1, t)=&U(t)-\wp w(1,t)- (\wp \gamma(1)+\gamma^{\prime}(1) )X(t)\notag\\
		&-\int_{0}^{1}(\wp k(1,y)+k_x(1,y)) u(y,t) dy , \label{eq18}
	\end{align}
	where 
	\begin{align}
		\wp = q-\frac{1}{2}\int_{0}^{1} \lambda(y)dy \label{eq019}
	\end{align}
	and  recalling \eqref{qlt} guaranteed in Assumption \ref{Assumption2} we derive
	\begin{align}
		\wp>\frac{1}{2}. \label{eq19}
	\end{align}
	
	In a method similar to the construction of the direct backstepping transformation, we obtain the inverse transformation of \eqref{wu} is given in the form of
	\begin{align}
		u(x,t) = w(x,t) + \int_{0}^{x} k^I(x,y) w(y,t) dy + \gamma^I(x) X(t) \label{uw}
	\end{align}
	where
	\begin{align}
		&	{k}^I_{x x}(x, y)-{k}^I_{y y}(x, y)  + {{\lambda}(x)}  {k}^I(x, y)=0, \\
		&{k}^I_y(x, 0) = 0 ,\\
		&{k}^I(x, x)  =-\frac{1}{2  } \int_0^x {\lambda}(y) d y, \\
		&\gamma^I(x)=\gamma(x)+\int_0^x k^I(x, s) \gamma(s) d s. 
	\end{align}
	
	Introduce the second transformation
	\begin{align}
		\beta(x,t) = w(x,t) - \int_0^x h(x,y) w(y,t) dy \label{bw}
	\end{align}
	where
	\begin{align}
		&  h_{xx}(x,y)-  h_{yy}(x,y) =0,\label{eq:h1}\\
		& h_x(x,x) + h_y(x,x) = 0,\label{eq:h2} \\
		&  \int_{0}^{x} \gamma(y) B(\theta) h(x,y) dy-\gamma(x) B(\theta)  - h_y(x,0) =0, \label{eq:h3}\\
		&h(0,0)=0,\label{eq:h4}
	\end{align}
	with choosing the control input $U(t)$ as a nominal continuous-in-time controller $U_c (t)$ defined by
	\begin{align}
		U_c (t) = & \int_{0}^{1} H(y; \lambda(y), \theta) u(y,t) dy + G(; \theta) X(t) \label{Uc}
	\end{align}
	where
	\begin{align}
		H(y; \lambda(y),\theta) = & \wp k(1,y)+k_x(1,y) +rh(1,y)+h_x(1,y) \notag\\
		&- \int_{y}^{1} k(z,y)(rh(1,z)+h_x(1,z) ) dz , \label{eq31}\\
		G(;\theta) = & -  \int_{0}^{1} (rh(1,y)+h_x(1,y) ) \gamma(y) dy \notag\\
		& +\wp \gamma(1)+\gamma^{\prime}(1),\label{eq32}
	\end{align}
	we arrive
	\begin{align}
		\dot{X}(t) =& (A+BK) (\theta) X(t) + B(\theta) \beta (0,t), \label{targ1} \\
		\beta_t(x,t) = & \beta_{xx} (x,t) , \label{targ3} \\
		\beta_x(0,t) = & 0, \label{targ2}\\
		\beta_x(1,t)  = & -r \beta(1,t) \label{targ4}
	\end{align}
	where
	\begin{align}
		r= \wp + h(1,1)=q+k(1,1)+h(1,1).  \label{eq33}
	\end{align}
	Using \eqref{eq:h2} and \eqref{eq:h4}, we can derive $h(1,1)=0$. Hence, \eqref{eq19} implies that
	\begin{align}
		r   > \frac{1}{2}. \label{eqr}
	\end{align}
	The detailed calculations regarding \eqref{bw}--\eqref{eq33} are shown in Appendix \ref{Appendix2}. The solution of $h(x,y)$ is  covered by the ones found in \cite{8281082} which ensures that \eqref{eq:h1}--\eqref{eq:h4} have a piecewise $C_2$-solution. \textcolor{black}{For know parameters, the global exponential stability of the target system \eqref{targ1}--\eqref{targ4} is provable following a Lyapunov argument, thus implying that of the original system under the continuous-in-time control law \eqref{Uc}. However, since our goal is to design an event-triggered adaptive control law by emulation of \eqref{Uc} and certainty equivalence, stability results will be established later on.} 
	
	According to \cite{9222479}, there exists kernel $h^I(x,y) \in \mathbb{R}$  for the
	inverse transformation of \eqref{bw}, which is in the form of
	\begin{align}
		w(x,t) = \beta(x,t) + \int_{0}^{x} h^I(x,y) \beta(y,t) dy.   \label{wb}
	\end{align}
	
	\section{Nominal Event-triggered Control Design} \label{sec4}
	\textcolor{black}{By emulation of}  the nominal continuous-in-time control design, the event-triggered control input is given as
	\begin{align}
		U_{d}(t) \ := & U_{c}(t_j) \notag\\
		=	& \int_{0}^{1} H(y; \lambda(y), \theta) u(y,t_j) dy+ G(;\theta) X(t_j)  .\label{c16}
	\end{align}
	for $t \in  [t_j , t_{j+1}), j \in  \mathbb{N}$, which is obtained by sampling the controller $U_{c}(t)$ \eqref{Uc} at a certain sequence of time instants $\left(t_j\right)_{j \in \mathbb{N}}$ that will be given based on an event trigger mechanism. The control signal \eqref{c16} is in a piecewise form, remaining constant between two consecutive time instants and being updated when a certain condition is met.  
    
	Define \textcolor{black}{the input holding error, namely,}  the difference between the continuous-in-time control signal $U_{c}(t)$ in \eqref{Uc} and the event-triggered control input $U_d(t) $
	in \eqref{c16} as $d(t)$, given by
	\begin{align}
		d(t)\ := & U_{c}(t) -U_d (t) ,\label{c170}
	\end{align}
	for $t \in\left[t_j, t_{j+1}\right)$, which will be used in building the event-triggered mechanism (ETM).\\
	The sequence of time instants $I=\left\{t_0, t_1, t_2, \ldots\right\}\ (t_0=0)$ is defined as (for $j\in \mathbb{N}$):\\
	\begin{equation}
		t_{j+1}=\inf \left\{t \geq t_j+ \tau \ | \ m(t) \leq 0 \right\},\label{c200}
	\end{equation}
	where the positive constant $\tau$ represents the minimal dwell time  and the dynamic variable $m(t)$ in \eqref{c200} satisfies the ordinary differential equation,
	\begin{align}
		\dot{m}(t)= & -\eta m(t) +\kappa_1\| {u}[t]\|^2 \label{c21}
	\end{align}
	for $t \in\left(t_j, t_{j}+ \tau \right)$, $j\in \mathbb{N}$, and 
	\begin{align}
		\dot{m}(t)= & -\eta m(t)-\lambda_d d(t)^2  +\kappa_1\| {u}[t]\|^2 \label{c211}
	\end{align}
	for $t \in \left(  t_{j}+ \tau, t_{j+1} \right)$, $j\in \mathbb{N}$ with $m\left(t_0\right)=m(0)=0$.   {\color{black}The design parameters $\eta,  \kappa_1 >0$ are free} and the positive design parameter  $ \lambda_d  $ is to be determined later. We define \begin{align}
		m((t_{j}+\tau)^{-})=m((t_{j}+\tau) )=m((t_{j}+\tau)^{+}) \label{mtdefine0}
	\end{align} for $j\in \mathbb{N}$ such that  $m(t)$ is continuous in $t\in [t_j, t_{j+1})$, $j\in \mathbb{N}$, and \begin{align}
		m(t_j)= \omega_0 d^2(t_j^-) \label{mtdefine10}
	\end{align} for $j\in \mathbb{N}^\star$, where $\omega_0$ is given later.
	
	\begin{lema}
		Consider the event-triggered mechanism of the form \eqref{c200}--\eqref{mtdefine10}, which generates a set of increasing event times $I=\left\{t_j\right\}_{j \in \mathbb{N}}$ with $t_0=0$, satisfying   $m(t) \geq 0$ for all $t>0$, and $\lim_{t \rightarrow \infty} t_j = \infty$, i.e., no Zeno behavior.
	\end{lema}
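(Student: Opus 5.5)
Both claims follow from the structure of the triggering rule \eqref{c200}--\eqref{mtdefine10}. Nonnegativity comes from a case split on each interval $[t_j,t_{j+1})$. Absence of Zeno behavior comes from the dwell time $\tau$ imposed in \eqref{c200}.

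\textbf{Nonnegativity of $m$.} Fix $j\in\mathbb{N}$ and treat the interval $[t_j,t_{j+1})$.

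At the left endpoint, $m(t_0)=0$, and for $j\ge1$ we have $m(t_j)=\omega_0 d^2(t_j^-)\ge 0$ by \eqref{mtdefine10}, since $\omega_0>0$.

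On the dwell segment $(t_j,t_j+\tau)$, the dynamics \eqref{c21} give, by variation of constants,
\begin{equation*}
m(t)=e^{-\eta(t-t_j)}m(t_j)+\kappa_1\int_{t_j}^{t}e^{-\eta(t-s)}\|u[s]\|^2\,ds .
\end{equation*}
Both terms are nonnegative, so $m(t)\ge0$ there. The continuity convention \eqref{mtdefine0} then carries $m(t_j+\tau)\ge0$ over to the start of the second segment.

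On $(t_j+\tau,t_{j+1})$, the definition \eqref{c200} makes $t_{j+1}$ the first time after $t_j+\tau$ at which $m\le0$. Since $m$ is continuous on this open interval, which follows from well-posedness of the closed loop with the piecewise constant input, we get $m(t)>0$ for every $t\in(t_j+\tau,t_{j+1})$.

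There is one subtlety to check: if $m(t_j+\tau)=0$, then $t_{j+1}=t_j+\tau$ and this segment is empty. If $t_{j+1}=\infty$, the same reasoning gives $m\ge0$ for all later times. Taking the union over $j$ yields $m(t)\ge0$ for all $t>0$.

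\textbf{No Zeno behavior.} By \eqref{c200}, every event time satisfies $t_{j+1}\ge t_j+\tau$. By induction, $t_j\ge j\tau$. Since $\tau>0$, this gives $t_j\to\infty$.

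If only finitely many events occur, the last inter-event interval is $[t_J,\infty)$, and the claim holds trivially. The sequence is strictly increasing because each step is at least $\tau$.

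\textbf{Main obstacle.} The only delicate point is that the mechanism is well defined. The argument needs $m$ to be continuous on each inter-event interval and needs the closed-loop solution, which determines $\|u[t]\|$ and $d(t)$, to exist on each $[t_j,t_{j+1})$ without finite escape.

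I would handle this by appealing to existence of a unique solution of the linear PDE-ODE cascade \eqref{0}--\eqref{3} driven by the constant input $U_d$ on each interval. This follows from standard semigroup theory for the boundary-controlled heat equation coupled to a linear ODE. Consequently $d(t)^2$ and $\|u[t]\|^2$ are continuous and locally bounded, the right-hand sides of \eqref{c21}--\eqref{c211} are locally integrable, and the solution can be concatenated across event times. Because the dwell time already excludes accumulation of events, global existence follows by this concatenation.
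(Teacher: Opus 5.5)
Your proof is correct and follows essentially the same route as the paper: variation of constants on $(t_j,t_j+\tau)$ using \eqref{c21}, positivity on $[t_j+\tau,t_{j+1})$ directly from the definition of $t_{j+1}$ in \eqref{c200}, recursion via the reset \eqref{mtdefine10}, and exclusion of Zeno behavior from the minimal dwell time $\tau>0$. The paper's detour through the auxiliary function $f$ in \eqref{f(t)}--\eqref{taut0} only fixes the value of $\tau$, which your bound $t_j\ge j\tau$ does not need, and your version is slightly more careful than the paper's: you write the convolution kernel correctly as $e^{-\eta(t-s)}$, claim only $m\ge 0$ (the paper's claim of strict positivity fails at $t=0$ and whenever $d(t_j^-)=0$), and flag the well-posedness of the closed loop on each inter-event interval, which the paper takes for granted.
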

	\begin{proof}
		We have $m(t_0)=m(0)=0$, and from \eqref{c21} it follows that for $t\in [0,\tau]$
		\begin{align}
			m(t) = & m(t_j) e^{- \eta (t-t_j)} +\int_{t_j}^t e^{ \eta  (t- \tau )}  \kappa_1\| {u}[\tau]\|^2   d \tau ,
		\end{align}
		which indicates that $m(t) > 0$ for $t\in [0,\tau]$. Considering $m(t)$ is continuous in $t\in [0, t_{1})$, we can get $m(t_1^-)=0$  from \eqref{c200} such that $m(t)>0$ for $t\in [\tau, t_1)$.  Again by using $m(t_1)=\omega_0 d^2(t_1^-)>0$ and $m(t_2^-)=0$ we obtain $m(t)\geq 0$ for $t\in [t_1,t_2)$. By recursion it is derived that $m(t) \geq 0$ all the time. 

	Inspired by \cite{101093imamcidnaf042}, we consider a piecewise right continuous function $f(t)  $ satisfies	
	\begin{align}
		\dot{f}(t)= \begin{cases}-a_2 f^2(t)-a_1 f(t)-a_0, & \forall t \in\left(t_j, t_j+\tau\right), \\ 0, & \forall t \in\left(t_j+\tau, t_{j+1}\right),\end{cases} \label{f(t)}
	\end{align}	
	for $j \in \mathbb{N}$, where $\tau>0$ is the minimal dwell time, and $a_0, a_1, a_2>0$, with $f\left(t_j\right)=\omega_1>0$ and $f\left(\left(t_j+\tau\right)^{-}\right)=f\left(t_j+\tau\right)= \omega_0>0$ and $\omega_0<\omega_1$. This yields that $\omega_0 \leq f(t) \leq \omega_1$. Specifically, the function $f(t)$ reduces from $ \omega_1$ to $ \omega_0$ for all $t \in\left[t_j, t_j+\tau\right)$, and remains   $ \omega_0$ for all $t \in\left[t_j+\tau, t_{j+1}\right)$. The minimal dwell time $\tau$ is	given by
	\begin{align}
		\tau=\int_{\omega_0}^{\omega_1} \frac{1}{a_2 s^2+a_1 s+a_0} d s. \label{taut0}
	\end{align}	
	Thus,  the avoidance of the Zeno phenomenon in the ETC design  is thus obtained.	
		\end{proof}
	\begin{theorem} \label{theorem1}
		For all initial conditions $u[0] \in L^2(0,1)$  and $X(0) \in \mathbb{R}^n$, the closed-loop system,  which consists of the plant \eqref{0}--\eqref{3} and the event-triggered control law \eqref{c16}  with the event-triggering
		mechanism \eqref{c200}--\eqref{mtdefine10}, is globally exponentially stable in the sense that there exist positive constants $M_1, \sigma$ such that
		\begin{align}
			\Omega(t) \leq M_1 \Omega(0) e^{-\sigma t}, \label{ome}
		\end{align}
		where
		\begin{align}
			\Omega(t) =  \|u[t]\|^2 + |X(t)|^2+ m(t) +f(t) d^2(t). \label{Omega}
		\end{align}
	\end{theorem}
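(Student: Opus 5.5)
The plan is to work entirely in the target coordinates $(X,\beta)$ and then transfer the estimate back to $(X,u)$ using the invertible transformations \eqref{wu}, \eqref{uw}, \eqref{bw}, \eqref{wb}.

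\textbf{Step 1: target system with holding error.} Under the event-triggered input $U_d=U_c-d$, the target system \eqref{targ1}--\eqref{targ3} is unchanged, and only the right boundary condition is perturbed:
\begin{align*}
\beta_x(1,t)=-r\beta(1,t)-d(t).
\end{align*}
Bounded kernels give norm equivalence: there are constants $c_1,c_2>0$ with
\begin{align*}
c_1\big(\|\beta\|^2+|X|^2\big)\le \|u\|^2+|X|^2\le c_2\big(\|\beta\|^2+|X|^2\big).
\end{align*}

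\textbf{Step 2: Lyapunov functional.} Take
\begin{align*}
V=X^TPX+\frac{a}{2}\|\beta\|^2+\frac{b}{2}\|\beta_x\|^2 .
\end{align*}
The $\|\beta_x\|^2$ term is needed because $\beta(0,t)$ enters the ODE. Agmon's inequality bounds $\beta(0)^2$ by $\|\beta\|^2$, $\|\beta_x\|^2$ and $\beta(1)^2$. Integration by parts yields $-\|\beta_x\|^2$ and $-r\beta(1)^2-\beta(1)d$, which are controlled using $r>\tfrac12$ from \eqref{eqr} and Young's inequality. The $\beta_x$ part gives $-b\|\beta_{xx}\|^2$ and a boundary term $b\,\beta_x(1)\beta_t(1)$. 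Handling that boundary term through $\beta_x(1)=-r\beta(1)-d$ produces $\dot d$, so I would instead rely only on $a\|\beta\|^2$, $X^TPX$ and the Poincar\'e/Agmon estimates. The target is
\begin{align*}
\dot V\le-\mu V+c_d\,d^2 .
\end{align*}

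\textbf{Step 3: bound on $\dot d$ in the dwell phase.} Compute $\dot d=\dot U_c$ by differentiating \eqref{Uc} along \eqref{0}--\eqref{3}. Integrating by parts in $\int H u_t$ produces $u_x(1)$ and $u(1)$ terms. Assumption \ref{Assumption2} (through $\wp$) is intended to remove the $u(1)$ term. Using $u_x(1)=U_d-qu(1)$, I expect the bound
\begin{align*}
\dot d^2\le \varepsilon_1 d^2+\varepsilon_2\big(\|u\|^2+|X|^2+\|u_x\|^2\big).
\end{align*}
The $\|u_x\|^2$ contribution is the main obstacle. I would absorb it into the dissipation $-\|\beta_x\|^2$ produced by the $\beta$-part of $V$. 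If that fails, I would take $H$ regular enough ($C^2$ kernels) that only $u$, $u(1)$ and $U_d$ appear, and then bound $u(1)$ by Agmon's inequality together with the dissipation.

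\textbf{Step 4: composite functional.} Define
\begin{align*}
W=V+m+f(t)d^2 .
\end{align*}

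On $(t_j,t_j+\tau)$, \eqref{f(t)} gives
\begin{align*}
\frac{d}{dt}(fd^2)=(-a_2f^2-a_1f-a_0)d^2+2fd\dot d .
\end{align*}
Young's inequality on $2fd\dot d$ and the bound of Step 3 let me choose $a_2,a_1$ to absorb the $f^2d^2$ terms and $a_0>c_d$ to cancel $c_d d^2$. The state terms are then absorbed by $\mu V$, provided $\omega_1$ and $\kappa_1$ are small relative to $\mu$. The term $\kappa_1\|u\|^2$ from \eqref{c21} is absorbed the same way.

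On $(t_j+\tau,t_{j+1})$, $f=\omega_0$ is constant and $\dot m$ supplies $-\lambda_d d^2$. Choosing
\begin{align*}
\lambda_d\ge c_d+\omega_0\varepsilon_1+\dots
\end{align*}
cancels the $d^2$ terms, with $d\dot d$ again handled by Young's inequality. Since $m\ge0$ (by the preceding Lemma), $-\eta m$ contributes decay.

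At each event $t_j$, $d(t_j)=0$ and $m(t_j)=\omega_0d^2(t_j^-)=f(t_j^-)d^2(t_j^-)$. Hence $W$ is continuous across events: the $fd^2$ mass is transferred into $m$. This is exactly why \eqref{mtdefine10} is chosen. At $t_j+\tau$, $W$ is continuous because $m$ and $f$ are continuous there.

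Therefore $\dot W\le-\sigma W$ piecewise with $W$ continuous, which gives $W(t)\le W(0)e^{-\sigma t}$. Since $\omega_0\le f\le\omega_1$ and the norms are equivalent (Step 1), $W$ is equivalent to $\Omega$, and \eqref{ome} follows. Zeno-freeness (the dwell time $\tau$ from the Lemma) guarantees the estimate holds for all $t\ge0$.
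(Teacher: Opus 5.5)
Your plan is essentially the paper's proof: the same functional $\frac{r_a}{2}\|\beta\|^2+r_bX^TPX+m+fd^2$ (once the $\|\beta_x\|^2$ term is dropped), Young's inequality $2fd\dot d\le a_2f^2d^2+\dot d^2/a_2$ cancelled by $\dot f d^2$ in the dwell phase and by $-\lambda_d d^2$ afterwards, continuity of $m+fd^2$ across events via $m(t_j^-)=0$ and $m(t_j)=\omega_0d^2(t_j^-)$, and the transfer back to $\Omega$ through the kernel bounds. One correction: Assumption~\ref{Assumption2} removes $u(1,t)$ from $U_c$, not from $\dot d$, where its coefficient is $-H_y(1)-qH(1)$. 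Consequently the terms $u^2(0,t)$, $u^2(1,t)$ (or your $\|u_x\|^2$) must be absorbed into the retained dissipation $-r_a\|\beta_x\|^2-r\,r_a\beta^2(1,t)$, by taking $r_a$ large and weighting $X^TPX$ by $r_b$ so that $\kappa_1$ need not be small, before collapsing to an exponential rate, rather than ``by $\mu V$'' as your Step~4 says.
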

	\begin{proof}
		It can be shown that applying the backstepping transformations \eqref{wu} and \eqref{wb} into the system \eqref{0}--\eqref{3} with choosing the control law  as $U_d$ given in \eqref{c16}, yields the target system \eqref{targ1}--\eqref{targ2} and  the other boundary condition
		\begin{align}
			\beta_x(1,t)  =  -r \beta(1,t) - d(t)  \label{wt3}
		\end{align}
		for $t \in\left[t_j, t_{j+1}\right), j \in \mathbb{N}$.
		
		Define a Lyapunov function as
		\begin{align}
			V(t)=&  \frac{r_a}{2} \|\beta[t]\|^2 + {r_b} X^T(t) P(\theta) X(t) +  m(t)  +  f(t) d^2(t)   \label{lyapunov}
		\end{align}
		where $ r_a,r_b $ are positive constants to be chosen later and $P(\theta)$ is a positive definite matrix  to satisfy \eqref{PQmatrix}. Using \eqref{mtdefine0}, \eqref{mtdefine10} and the definition of $f(t)$ given before, we can prove that $m(t)+f(t) d^2(t)$ is continuous and thus $V(t)$ is continuous as well.  Considering the target system \eqref{targ1}--\eqref{targ2}, \eqref{wt3} and  using \eqref{c21},   we obtain  for $t \in\left(t_j, t_{j}+\tau \right)$ 
		\begin{align}
			\dot{V}  = &	r_a  \big( - \|\beta_x \|^2 - r  \beta^2(1,t)  - \beta(1,t)  d(t) \big)\notag\\
			& + r_b \big(- X^T(t) Q(\theta) X(t) +2 X^T(t) P(\theta)  B(\theta) \beta(0,t)  \big)  \notag\\
			&    -\eta m(t) +\kappa_1\| {u}[t]\|^2  +2f(t)d(t)\dot{d}(t) + \dot{f}(t) d^2(t)   .   \label{Vlya}
		\end{align}
		The event-triggered control input $U_d(t)$ is constant on $t \in\left(t_j, t_{j+1}\right)$, i.e., $\dot{U}_d(t)=0$. Taking the time derivative of \eqref{c170}, recalling  \eqref{Uc}--\eqref{eq32}, applying integration by parts, we obtain that
		\begin{align}
			\dot{d}(t)	=&\underline{b_1 } d(t)  +\underline{b_2} u(0,t)  +\underline{b_3} u(1,t) \notag\\
			&+ \int_{0}^{1} \underline{b_4}(y) u(y,t)dy   +\underline{B_5} X(t)
			\label{c18}
		\end{align}
		for $t\in (t_j,t_{j+1})$, where
		\begin{align}
			\underline{b_1 }  = &  {H}( 1 ) ,\label{vn1}\\
			\underline{b_2} = &  {G} B(\theta)  -  {H}_y( 0 )  ,\\
			\underline{b_3}  =&  -  {H}_y( 1 )-q {H}(1 ),\\
			\underline{b_4} (y )= 	&  {H}_{yy} (y ) +  {H}  (y ) \lambda(y)  + {H}( 1 )  {H} ( y ) ,\\
			\underline{B_5} = &     {G}  A(\theta) + {H}( 1 )  {G} .
		\end{align}
		Applying the Cauchy-Schwarz inequality into \eqref{c18}, we then obtain that
		\begin{align}
			\dot{d}(t)^2 \leq &\epsilon_1 d(t)^2+\epsilon_2  {u}^2(0,t)  +\epsilon_3 {u}^2(1,t) +\epsilon_4 \|u[t]\|^2  \notag\\
			&+\epsilon_5 |X(t)|^2  \label{ddt1}
		\end{align} 	for $t \in\left(t_j, t_{j+1}\right)$, where	\begin{align}
			&\epsilon_1= 5  \underline{b_1}^2,  
			\ \epsilon_2= 5  \underline{b_2}^2 ,\ \epsilon_3= 5 \underline{b_3}^2, \notag \\
			&	\epsilon_4=5  \underline{b_4} ^2 ,\
			\epsilon_5=5| \underline{B_5}|^2 .\label{e4}
		\end{align}
		To simplify notation, define the following constants for the bounds for both the   backstepping kernels and inverse backstepping kernels as $\bar{f} := \|f\|_{\infty}$ for $f=k,k^I,\gamma,\gamma^I,h,h^I$.
		The inverse transformations \eqref{uw} and \eqref{bw},  together with the Cauchy–Schwarz and Young's inequalities, yield
		\begin{align}
			u^2(1,t) \leq & 3  {w}^2(1,t)+ 3 \bar{k^I}^2 \| {w}[t]\|^2 + 3 \bar{\gamma^I}^{2}  |X(t)|^2 ,\label{eq63} \\
			u^2(0,t) \leq  & 2  {w}^2(0,t)  + 2 \bar{\gamma^I}^{2}  |X(t)|^2, \\
			{w}^2(1,t) \leq & 2  {\beta}^2(1,t)  + 2 \bar{h^I}^2 \| {\beta}[t]\|^2 ,\\
			{w}^2(0,t) =  &   {\beta}^2(0,t),\\
			\|u[t]\|^2 \leq & 2 (1+\bar{k^I})^2 \| {w}[t]\|^2 + 2 \bar{\gamma^I}^{2} |X(t)|^2   , \\
			\| {w}[t]\|^2  \leq & (1+\bar{h^I})^2 \| {\beta}[t]\|^2 . 
		\end{align}
		For the $\beta$-subsystem and $w$-subsystem, using  Agmon's  inequality, the following inequalities hold:
		\begin{align}		 
			{w}^2(0,t) \leq&    {w}^2(1,t)+\|  {w}_x[t]\|^2+\|  {w}[t]\|^2,\label{v3}  \\
			{\beta}^2(0,t) \leq&   {\beta}^2(1,t)+\|  {\beta}_x[t]\|^2+\|  {\beta}[t]\|^2. \label{eq148}
		\end{align}
		Therefore, using \eqref{eq63}--\eqref{eq148}, we obtain
		\begin{align}
			u^2(0,t) \leq  & 2  ( {\beta}^2(1,t)+\|  {\beta}_x[t]\|^2+\|  {\beta}[t]\|^2 +   \bar{\gamma^I}^{2}  |X(t)|^2),   \label{eq145} \\
			u^2(1,t) \leq & 6  {\beta}^2(1,t)+  6 \bar{h^I}^2  \| {\beta}[t]\|^2 + 3 \bar{k^I}^2  (1+\bar{h^I})^2 \| {\beta}[t]\|^2  \notag\\
			& + 3  \bar{\gamma^I}^{2}  |X(t)|^2 , \label{eq144} \\
			\|u[t]\|^2 \leq &   2 (1+\bar{k^I})^2 (1+\bar{h^I})^2 \| {\beta}[t]\|^2 + 2 \bar{\gamma^I}^{2} |X(t)|^2  . \label{equation70}
		\end{align}
		For the $\beta$-subsystem, using Poincare's inequality, we have that
		\begin{align}
			-\| {\beta}_x[t]\|^2  \leq & \frac{1}{2}  {\beta}^2(1,t) - \frac{1}{4} \| {\beta}[t]\|^2. \label{eq74}
		\end{align}
		  	Using \eqref{Vlya},   \eqref{c21}, \eqref{f(t)}, \eqref{ddt1}  and  \eqref{eq148}--\eqref{eq74}, the following estimate holds:
		\begin{align}
			\dot{V} \leq&    -\mu_1 \| \beta[t]\|^2- \mu_2   \|\beta_x[t]\|^2-\mu_3 \beta^2(1,t) -\mu_4 |X(t)|^2 \notag\\
			&-\mu_5 d(t)^2 -\eta m(t) 
		\end{align}
		for $t \in\left(t_j, t_{j}+\tau \right)$, where
		\begin{align}
			\mu_1 = &\frac{r_a}{8} - \frac{\epsilon_3}{a_2} (3 \bar{k^I}^2  (1+\bar{h^I})^2 +  6 \bar{h^I}^2)   -\frac{r_b}{4 \delta_2}\notag\\
			&-  \frac{2 \epsilon_2}{a_2}- 2 (1+\bar{k^I})^2 (1+\bar{h^I})^2 (\kappa_1 +  \frac{\epsilon_4}{a_2}), \label{vmu1} \\
			\mu_2 = &\frac{r_a}{2}-    \frac{2 \epsilon_2}{a_2}-\frac{r_b}{4 \delta_2},  \\
			\mu_3 = & {(r-\frac{1}{4}-\delta_1)} r_a -   \frac{2 \epsilon_2}{a_2}   -  \frac{6\epsilon_3}{a_2} -\frac{r_b}{4 \delta_2}, \\
			\mu_4 = & r_b \lambda_{\min} (Q) -2 r_b \delta_2 \lambda_{\max} (PBB^TP^T)  -  \frac{3  \bar{\gamma^I}^{2}  \epsilon_3}{a_2} \notag\\
			& -2 \bar{\gamma^I}^{2} ( \frac{\epsilon_2}{a_2} +\kappa_1 +  \frac{\epsilon_4}{a_2}), \label{vmu4} \\
			\mu_5= &a_1 f(t) +a_0-\frac{\epsilon_1}{a_2}-\frac{r_a  }{4 \delta_1}.
		\end{align}
		where $\delta_1, \ \delta_2$ are constants to be chosen later.  Choosing
		\begin{align}
			\delta_2 < & \frac{\lambda_{\min}(Q)}{2\lambda_{\max}(PBB^TP^T)} , \\
			r_b > & \frac{    \frac{3  \bar{\gamma^I}^{2}  \epsilon_3}{a_2} +2 \bar{\gamma^I}^{2} (  \frac{\epsilon_2}{a_2} +\kappa_1 +  \frac{\epsilon_4}{a_2}) }{\lambda_{\min}(Q)- 2\delta_2 \lambda_{\max} (PBB^TP^T)} ,\\
			r_a > & \frac{ 16  \epsilon_2}{a_2}  +\frac{8 \epsilon_3}{a_2}  (3 \bar{k^I}^2  (1+\bar{h^I})^2 +  6 \bar{h^I}^2)   \notag\\
			&+16 (1+\bar{k^I})^2 (1+\bar{h^I})^2 (\kappa_1 +  \frac{\epsilon_4}{a_2})+\frac{2 r_b}{ \delta_2} , \\
			\delta_1 < &  \frac{(r-\frac{1}{4})r_a-  \frac{2 \epsilon_2}{a_2} -   \frac{6\epsilon_3}{a_2} -\frac{r_b}{4 \delta_2}}{r_a}   ,\\
			a_0 \geq &  \frac{\epsilon_1}{a_2}+\frac{r_a  }{4 \delta_1} , 
		\end{align}
		such that $\mu_1, \mu_2, \mu_3, \mu_4 , \mu_5$ are positive constants and 
		\begin{align}
			\dot{V} \leq & - \sigma_1  V(t) \label{eq80}
		\end{align}
		for $t \in\left(t_j, t_{j}+\tau\right)$,	where 
		\begin{align}
			\sigma_1  = \min \{ \frac{2 \mu_1}{r_a}, \ \frac{\mu_4 }{r_b \lambda_{\max}(P)},\ \eta, \ \frac{\mu_5}{\omega_1} \}.
		\end{align}
		Similarly, 	using  \eqref{Vlya},  \eqref{c211}, \eqref{f(t)}, \eqref{ddt1}  and  \eqref{eq148}--\eqref{eq74}, we get
		\begin{align}
			\dot{V} \leq&    -\mu_1 \| \beta[t]\|^2- \mu_2   \|\beta_x[t]\|^2-\mu_3 \beta^2(1,t) -\mu_4 |X(t)|^2 \notag\\
			&  -\mu_6 d(t)^2-\eta m(t)
		\end{align}
		for $t \in\left(t_j+\tau , t_{j+1}  \right)$, where $\mu_1, \ \mu_2, \ \mu_3, \ \mu_4$ are given in \eqref{vmu1}--\eqref{vmu4} and 
		\begin{align}
			\mu_6 = \lambda_d-a_2 f^2(t)-\frac{\epsilon_1}{a_2}-\frac{r_a  }{4 \delta_0} .
		\end{align}
		Recalling the fact that $\omega_0 \leq f(t) \leq \omega_1$,   choosing
		\begin{align}		
			\lambda_d \geq  a_2 \omega_1^2+ \frac{\varepsilon_1}{a_2} +\frac{r_a }{4 \delta_0}  , 
		\end{align}
		we can derive 
		\begin{align}
			\dot{V} \leq & - \sigma_2  V(t) \label{eq801}
		\end{align}
		for $t \in\left( t_{j}+\tau, t_{j+1} \right)$, where the positive constant $\sigma_2$ is given by
		\begin{align}
			\sigma_2  = \min \{ \frac{2 \mu_1}{r_a}, \ \frac{\mu_4 }{r_b \lambda_{\max}(P)},\ \eta, \ \frac{\mu_6}{\omega_1} \}.
		\end{align} 
		Let
		\begin{align}
			\sigma = \min \{ \sigma_1, \ \sigma_2 \}.
		\end{align}
		Then,
		\begin{align}
			V\left(t_{j+1}\right) \leq & e^{-\sigma\left(t_{j+1}-(t_j+\tau )\right)} V\left(t_j+\tau\right) \notag\\
			\leq & e^{-\sigma\left(t_{j+1}-(t_j+\tau )\right)}e^{-\sigma \tau   }V\left(t_j\right).
		\end{align}
		Hence, for any $t \geq 0$ in $t \in\left[t_j, t_{j+1}\right), j \in \mathbb{N}$,  by recursion we derive:
		\begin{align}
			V(t)  \leq e^{-\sigma \left(t-t_j\right)} V\left(t_j\right) \leq e^{-\sigma t} V(0). \label{Vte}
		\end{align}
		The inverse transformation $\beta \mapsto u$ is given by 
		\begin{align}
			u(x,t) = &  {\beta}(x,t) + \int_{0}^{x} \check{h}^I(x,y) \hat{\beta}(y,t) dy + \check{\gamma}^I(x,t) X(t)  \notag\\
			& + \int_{0}^{x} \hat{k}^I(x,y) \big(\hat{\beta}(y,t) + \int_{0}^{y} \check{h}^I(y,z) \hat{\beta}(z,t) dz \big) dy,  
		\end{align}
		Recalling  \eqref{Omega}, \eqref{lyapunov} and using  \eqref{wu}, \eqref{bw} and \eqref{equation70}, we can derive
		\begin{align}
			\Omega(t) \leq& M_1  e^{-\sigma t} \Omega(0)
		\end{align}
		where
		\begin{align}
			M_1 = & 4 \max \{(1+\bar{k^I})^2 (1+\bar{h^I})^2, \bar{\gamma^I}^2 \} \times \notag\\
			& \ \  \max \{(1+\bar{k})^2 (1+\bar{h})^2, \bar{\gamma}^2  \}.
		\end{align}
	\end{proof}
We present the design steps and results in the nominal ETC design in Fig. \ref{figkernel}.
	\begin{figure}
	\hspace{0.1cm}
	\includegraphics[width=9cm]{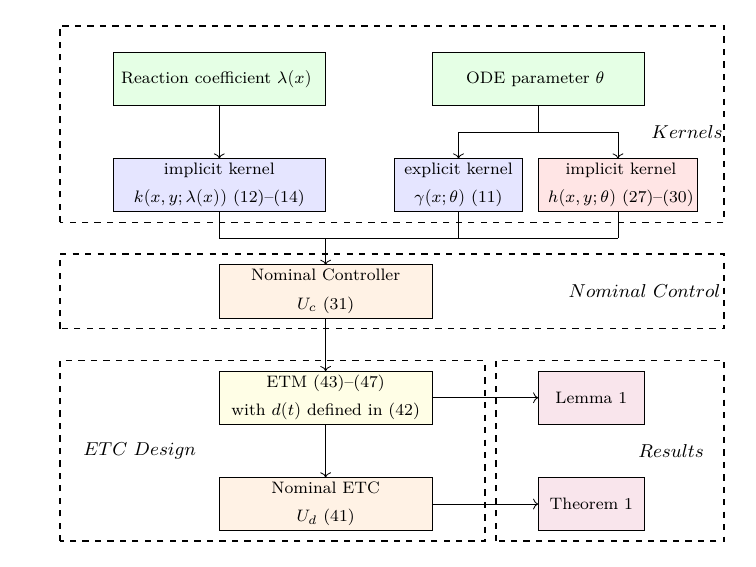} 	
	\caption{ The design process and results under nominal event-tiggered controlwith known parameters.} 
	\label{figkernel}
\end{figure}
	\section{Neural \textcolor{black}{operator}-based Adaptive Event-triggered Control Design} \label{sec5}
	\textcolor{black}{In this section, we develop a neural operator-based  event-triggered adaptive backstepping control design. Following a certainty equivalence principle, we  extend the ETC control methodology in the previous section to the case where $\theta$ and $\lambda(x)$ are unknown.} We build adaptive update laws $\hat{\lambda}$ and $\hat{\theta}$ for the unknown  parameters, and introduce the neural operator using DeepONet to approximate the mapping from
	the estimated system parameters to the backstepping kernels, so as to facilitate real-time implementation of the adaptive controller.
	\subsection{ \textcolor{black}{Adaptive} Control Design}
	Unlike the nominal backstepping design for known-parameter systems, the three transformation kernels $k(x,y), \ \gamma(x), \ h(x,y)$ in the present setting depend  on the unknown reaction coefficient, and therefore become  time-varying under adaptive estimation. In what follows, we define the kernel funcitons derived by estimated parameters as:
	\begin{align}
		\check{k}(x,y,t):= & k(x,y; \hat{\lambda}(t)), \label{checkk} \\
		\check{\gamma}(x,t) := & \gamma(x; \hat{\theta}(t)) ,\label{checkg} \\
		\check{h}(x,y,t) := & h(x,y; \hat{\theta}(t)).\label{eq:checkh}
	\end{align}
	In this section, we redefine   $\bar{f} := \|f\|_{\infty}$ for $f= \check{\gamma},\check{\gamma}^I,\check{h},\check{h}^I$.
	We choose the  update law for
	the estimate $\hat{\lambda}$ as 
	\begin{align} 
		\hat{\lambda}_t(x, t):= & \operatorname{Proj}_{0}(\hat{\phi}_{\lambda}(x, t), \hat{\lambda}(x, t)), \label{eventAdaptivelawlambda} \\
		\hat{\phi}_{\lambda}(x, t):= & \frac{r_1 u(x, t)}{1+\Upsilon (t)}  \left(\hat{\beta}(x, t)-\int_x^1 \hat{k}(y, x, t) \hat{\beta}(y, t) dy\right)  \label{evl}
	\end{align}
	and update law for the estimate $\hat{\theta}_i$ as (for $i \in \mathbb{N}^{\star}$)
	\begin{align}
		\dot{\hat{\theta}}_i(t):= & \operatorname{Proj}_{i}(\hat{\phi}_{\theta_i}( t),\theta_i(t)),\label{equation105} \\
		\hat{\phi}_{\theta_i}( t):= &  \frac{1}{1+\Upsilon (t)}  \bigg( r_{3}\big( -\int_{0}^{1} \hat{\beta}(y,t) \check{\gamma}(y,t) dy A_i X(t) \notag\\
		& -\int_{0}^{1} \hat{\beta}(y,t) \check{\gamma}(y,t) dy B_i u(0,t) \big)  \notag\\
		& +  2 r_2 \big( {  X^T(t) P(\hat{\theta}) A_i X(t)+  X^T(t) P(\hat{\theta}) B_i u(0,t) } \big) \bigg)  \label{eventAdaptivelawtheta}
	\end{align}
	where positive constants  $r_1, \ r_2, \ r_3$ are  the adaptation gains   and $\Upsilon(t)$ is a function to be given by \eqref{Upsilon} in Lyapunov analysis part. The neural \textcolor{black}{operator}-based kernel function $\hat{k}$ is given later.  The projections are defined as 
	\begin{align}
		&\operatorname{Proj}_{0}(a, b):= \begin{cases}0, & \text { if }|b| \geq \bar{\lambda} \text { and } a b>0 \\ a , & \text { otherwise }\end{cases} \label{equation107} \\
		&\operatorname{Proj}_i(a, b):= \begin{cases}0, & \text { if }|b| \geq \bar{\theta_i} \text { and } a b>0 \\ a , & \text { otherwise. }\end{cases}  \label{equation110}
	\end{align}
	where the known bounds $\bar{\lambda}$, $\bar{\theta_i}$ of  the unknown parameters $\lambda$ and $\theta_i $ for $i \in \mathbb{N}^* $,  $i \leq p$  are given by Assumption \ref{Assm1}. The projection operator $\operatorname{Proj}_0$ is to keep the estimated reaction coefficient $\hat{\lambda}$ bounded within $\bar{\lambda}$. The projection operators $\operatorname{Proj}_i$ are to keep the scalar components of
	parameter estimate vector $\hat{\theta}(t) = [\hat{\theta}_1, ... , \hat{\theta}_p ]$ bounded within
	$\bar{\theta_i}$. 
	
	We define the parameter estimate errors $\tilde{\lambda}(x,t), \tilde{\theta}(t)$ as 
	\begin{align}
		\tilde{\lambda}(x,t) =& \lambda(x) -\hat{\lambda}(x,t) ,\\
		\tilde{\theta}(t) = &\theta -\hat{\theta}(t). 
	\end{align}
	Define the set of functions
	\begin{align}
		\underline{K}=\left\{k \in C_{x, y}^2 C_t^1\left(\mathcal{T} \times \mathbb{R}_{+}\right)  \right\}, \quad \forall x \in[0,1], t \in \mathbb{R}_{+} .
	\end{align}
	Let $\Lambda$ be a compact set of $C^2([0,1])$.  
	The kernel operator $\mathcal{K}: \Lambda \rightarrow \underline{K} $ is defined by
	\begin{align}
		\mathcal{K} (\hat{\lambda}(\cdot,t)) :=  \check{k}(x, y, t)   . \label{calk}
	\end{align}
	The kernel operator $\mathcal{K}$ maps the estimated system parameters to the backstepping kernels, such that there exists a neural operator $\hat{\mathcal{K}}$ approximates the kernel operator $\mathcal{K}$.  For the subsequent Lyapunov analysis, the approximation of $\check{k}_t$ is also required. Thus, define the operator $\mathcal{K}_1: \Lambda^2 \rightarrow C_{x, y}^2 C_t^0\left(\mathcal{T} \times \mathbb{R}_{+}\right)$ such that
	\begin{align}
		\mathcal{K}_1\left(\hat{\lambda}(\cdot, t), \hat{\lambda}_t(\cdot, t)\right):=\check{k}_t(x, y, t).
	\end{align}
	According to \cite{BHAN2025105968} and \cite{doi:10.1137/1.9780898718607},  $\mathcal{K}_1$ is Lipschitz continuous. We then have the following lemma:
	\begin{lema} \label{lema1}
		For all $\iota>0$, there exists a neural operator $\hat{\mathcal{K}}$ such that  for all   $t\geq0$ and $(x, y) \in \mathcal{T}$ 
		\begin{align}
			& |\mathcal{K}(\hat{\lambda})(x, y,t)-\hat{\mathcal{K}}(\hat{\lambda})(x, y,t)| \notag\\
			& +\left|2 \frac{d}{d x}(\mathcal{K}(\hat{\lambda})(x, x,t)-\hat{\mathcal{K}}(\hat{\lambda})(x, x,t))\right| \notag\\
			& +|\left(\partial_{x x}-\partial_{y y}\right)(\mathcal{K}(\hat{\lambda})(x, y,t)-\hat{\mathcal{K}}(\hat{\lambda})(x, y,t)) \notag\\
			& -\hat{\lambda}(y,t)(\mathcal{K}(\hat{\lambda})(x, y,t)-\hat{\mathcal{K}}(\hat{\lambda})(x, y,t)) | \notag \\
			&+ |\mathcal{K}_1(\hat{\lambda},\hat{\lambda}_t)(x, y,t)-\frac{d}{d t}\hat{\mathcal{K}}(\hat{\lambda} )(x, y,t)| <\iota.\label{supk}
		\end{align}	
	\end{lema}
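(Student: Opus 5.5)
This lemma is a universal-approximation statement for DeepONet applied to a continuous operator on a compact set. The plan follows the route of \cite{BHAN2025105968}: first establish that the relevant operators are continuous on compact sets, then invoke the DeepONet universal approximation theorem \cite{lu2021learning,chen1995universal} in a norm strong enough to control derivatives.

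\textbf{Step 1: continuity of the kernel operator.} First I show that $\mathcal{K}:\Lambda\to C^2(\mathcal{T})$ is continuous, and in fact Lipschitz. I write the kernel equations \eqref{k1}--\eqref{k3}, with $\lambda$ replaced by $\hat\lambda$, in characteristic coordinates $\xi=x+y$, $\eta=x-y$. This turns them into an integral equation $G=G_0[\hat\lambda]+\Phi[\hat\lambda]G$. Successive approximations give a series whose $n$th term is bounded by $(\bar\lambda)^n(x+y)^n/n!$. Differentiating the integral equation twice gives the same estimates for $k_x$, $k_y$, $k_{xx}$ and $k_{yy}$, because $\hat\lambda\in C^2$ on the compact set $\Lambda$.

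Taking the difference of the series for two inputs $\hat\lambda_1,\hat\lambda_2$ then yields
\[
\|\mathcal{K}(\hat\lambda_1)-\mathcal{K}(\hat\lambda_2)\|_{C^2(\mathcal{T})}\le C_{\bar\lambda}\|\hat\lambda_1-\hat\lambda_2\|_{C^1}.
\]
The derivative with respect to time, $\check k_t=\mathcal{K}_1(\hat\lambda,\hat\lambda_t)$, solves the linearized kernel PDE. Its forcing term is $\hat\lambda_t\check k$, and its boundary datum is $-\frac12\int_0^x\hat\lambda_t$. The same successive-approximation argument therefore shows that $\mathcal{K}_1$ is Lipschitz, as already noted before the lemma.

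\textbf{Step 2: apply universal approximation in $C^2$.} Consider the operator $\hat\lambda\mapsto\mathcal{K}(\hat\lambda)$ as a map into $C^2(\mathcal{T})$; it is continuous on the compact set $\Lambda$. The DeepONet universal approximation theorem, in its version for derivatives, gives a network $\hat{\mathcal{K}}$ with
\[
\sup_{\hat\lambda\in\Lambda}\|\mathcal{K}(\hat\lambda)-\hat{\mathcal{K}}(\hat\lambda)\|_{C^2(\mathcal{T})}<\iota_1 .
\]
This is achieved, for example, by using smooth ($C^2$) activations in the trunk net and approximating in the $C^2$ norm.

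The first three terms in \eqref{supk} are then bounded by multiples of $\iota_1$:
\begin{itemize}
\item The first term is the $C^0$ error, at most $\iota_1$.
\item The diagonal derivative satisfies $2\frac{d}{dx}e(x,x)=2(e_x+e_y)(x,x)$, where $e=\mathcal{K}(\hat\lambda)-\hat{\mathcal{K}}(\hat\lambda)$, so it is bounded by $4\iota_1$.
\item The residual term satisfies $|(\partial_{xx}-\partial_{yy})e-\hat\lambda e|\le 2\iota_1+\bar\lambda\iota_1$, using the projection bound $|\hat\lambda|\le\bar\lambda$.
\end{itemize}

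\textbf{Step 3: the time-derivative term, which is the main obstacle.} The quantity $\frac{d}{dt}\hat{\mathcal{K}}(\hat\lambda)$ is the Fréchet derivative of the network applied to $\hat\lambda_t$. A uniform $C^2$ approximation of $\mathcal{K}$ does not automatically approximate this derivative.

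I would handle it by approximating the joint map $(\hat\lambda,\hat\lambda_t)\mapsto(\mathcal{K}(\hat\lambda),\mathcal{K}_1(\hat\lambda,\hat\lambda_t))$ on the compact set $\Lambda\times\Lambda_1$. Here $\Lambda_1$ is compact because $\hat\lambda_t$ is bounded through the normalization by $1+\Upsilon$ and the projection. The target is chosen so that the network's directional derivative matches $\mathcal{K}_1$. Concretely, I use the simultaneous approximation of a continuously Fréchet-differentiable operator together with its derivative on compact sets, which holds for networks with smooth activations (a Hornik-type result \cite{hornik1989multilayer} lifted to the DeepONet setting). Continuity of that derivative comes from the Lipschitz property of $\mathcal{K}_1$ established in Step 1. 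This gives an error $\iota_2$ in the last term, provided $\hat\lambda_t$ stays bounded.

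\textbf{Conclusion.} Choosing $\iota_1(7+\bar\lambda)+\iota_2<\iota$ gives \eqref{supk} uniformly in $t$. The bound is uniform because every trajectory $\hat\lambda(\cdot,t)$ remains in the compact set $\Lambda$, which relies on the assumption that $\Lambda$ is invariant under the projection-based update law. I expect justifying this compactness/invariance, together with the derivative-matching in Step 3, to require the most care.
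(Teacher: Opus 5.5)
Your outline follows the same route as the paper's brief proof: $C^2$ well-posedness and Lipschitz continuity of $\mathcal{K}$ and $\mathcal{K}_1$, then the DeepONet universal approximation theorem, then the argument of Bhan et al.\ (Theorem 3) for the derivative and time-derivative terms. The step that fails is your claim that $\Lambda_1$ is compact ``because $\hat{\lambda}_t$ is bounded through the normalization by $1+\Upsilon$ and the projection.''

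A bounded set of functions is not compact. Moreover, \eqref{lamt} is only an $L^2$-norm bound, and with the switching projection \eqref{equation107} the profile $\hat{\lambda}_t(\cdot,t)$ need not even be continuous in $x$.

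This is not cosmetic, because the last term of \eqref{supk} depends on it. A DeepONet's branch net reads only sensor values. Hence $\frac{d}{dt}\hat{\mathcal{K}}(\hat{\lambda})=\sum_{j=1}^{m}\partial_j\hat{\mathcal{K}}\,\hat{\lambda}_t(x_j,t)$, with $\partial_j$ the derivative in the $j$th sensor input, depends only on $\hat{\lambda}_t(x_1,t),\dots,\hat{\lambda}_t(x_m,t)$. By contrast, $\mathcal{K}_1(\hat{\lambda},\hat{\lambda}_t)$ depends on $\hat{\lambda}_t$ through integrals; on the diagonal, $\check{k}_t(x,x)=-\frac{1}{2}\int_0^x\hat{\lambda}_t(y,t)\,dy$.

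Suppose all you know is a bound $|\hat{\lambda}_t|\le M$ (or an $L^2$ bound). Then for any fixed network there is an admissible direction equal to $M$ except near the sensors, where it dips to $0$. Along it the network's time derivative vanishes, while $|\check{k}_t(1,1)|\approx M/2$, so the last term of \eqref{supk} is about $M/2$. The simultaneous approximation of $\mathcal{K}$ and its Fr\'echet derivative that you invoke therefore controls this term uniformly only over a compact (equicontinuous) set of directions, and that cannot be extracted from \eqref{lamt}.

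The paper does not derive this compactness. It builds it in implicitly by defining $\mathcal{K}_1$ on $\Lambda^2$: it is a standing hypothesis that $\hat{\lambda}_t(\cdot,t)$, like $\hat{\lambda}(\cdot,t)$, stays in the compact set $\Lambda\subset C^2([0,1])$ for all $t$. This is the same kind of invariance you already flag for $\hat{\lambda}$. State that hypothesis explicitly in place of your compactness argument, and then your Step 3 goes through; the rest of the proposal is fine.

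In Step 3, keep only the derivative-matching version. Approximating the joint map by a network whose second \emph{output} targets $\mathcal{K}_1$ says nothing about $\frac{d}{dt}\hat{\mathcal{K}}(\hat{\lambda})$, which is what appears in \eqref{supk}.
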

	\begin{proof}
		The system \eqref{k1}-\eqref{k3} has a unique $ {C}^2(\mathcal{T})$ solution, therefore the neural operator $\hat{\mathcal{K}} (\hat{\lambda} )(x, y,t)$ could approximate the kernels for estimated parameters. Using DeepONet universal approximation theorem (\cite{deng2022approximation}, Theorem 2.1), we can obtain a maximum approximation error defined as $\iota$. Then similar to (\cite{BHAN2025105968}, Theorem 3), we can finish the proof of Lemma \ref{lema1}.
	\end{proof}
	The neural \textcolor{black}{operator}-based adaptive control law is defined as
	\begin{align}
		U_{N}(t) =&  \int_{0}^{1} \hat{H}(y; \hat{\lambda}(y,t), \hat{\theta}(t)) u(y,t) dy + \check{G}(; \hat{\theta}(t)) X(t)\label{NOcontrol}
	\end{align}	
	where 
	\begin{align}
		\hat{H}(y; \hat{\lambda}(y,t),\hat{\theta}(t)) = & \wp \hat{k}(1,y)+\hat{k}_x(1,y) +r\check{h}(1,y)+\check{h}_x(1,y) \notag\\
		&- \int_{y}^{1} \hat{k}(z,y)(r\check{h}(1,z)+\check{h}_x(1,z) ) dz , \label{eq312}\\
		\check{G}(;\hat{\theta}(t)) = &-  \int_{0}^{1} (r\check{h}(1,y)+
		\check{h}_x(1,y) ) \check{\gamma}(y) dy \notag\\
		&+\wp \check{\gamma}(1)+\check{\gamma}^{\prime}(1)  \label{eq322}
	\end{align}
	with \textcolor{black}{neural operator} gain kernels $\hat{k}= \hat{\mathcal{K}}(\hat{\lambda})$. The signal $U_N$ does not act as the control input
	of the plant but used in the ETM. We define
	the event-triggered-type neural operator-based adaptive control input for $t \in  [t_j , t_{j+1}), j \in  \mathbb{N}$, as
	\begin{align}
		U_{a}  := & U_{N}(t_j) \notag\\
		=	& \int_{0}^{1} \hat{H}(y,t_j) u(y,t_j) dy + \check{G}( t_j) X(t_j).\label{c162}
	\end{align}
	Define the difference between the adaptive continuous-in-time control signal $U_{N}(t)$ in \eqref{NOcontrol} and the event-triggered control input $U_a(t)$
	in \eqref{c162} as $d(t)$, given by
	\begin{align}
		d(t)\ := & U_{N}(t) -U_a(t)   \label{c1701}
	\end{align}
	for $t \in\left[t_j, t_{j+1}\right)$. 
\begin{lema} \label{lemareMDT}
		For the neural \textcolor{black}{operator}-based adaptive event-triggered controller, consider the event-triggered mechanism \eqref{c200}--\eqref{mtdefine10} with $d(t)$ in \eqref{c211} and \eqref{mtdefine10} redefined according to \eqref{c1701}. Then, the Zeno phenomenon is excluded.
	\end{lema}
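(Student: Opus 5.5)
The key observation is that the triggering rule \eqref{c200} already imposes a minimal dwell time, since $t_{j+1}\ge t_j+\tau$ by construction. So Zeno exclusion reduces to showing that $\tau>0$ is a well-defined constant that does not depend on the adaptive quantities. The plan follows the proof of the first lemma (the nominal ETM) and checks that nothing there uses the specific form of $d(t)$ beyond it being piecewise continuous on each $[t_j,t_{j+1})$.

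\textbf{Step 1: well-posedness of the redefined signals.} I would first argue that with $d(t)=U_N(t)-U_a(t)$ from \eqref{c1701}, the signal $d$ is continuous on $(t_j,t_{j+1})$ and has a finite left limit $d(t_{j+1}^-)$.
\begin{itemize}
\item This needs $U_N$ to be continuous in $t$, which follows from three facts: $u\in C([0,\infty);L^2)$, the ODE state $X$ is continuous, and the estimates are continuous.
\item The update laws \eqref{eventAdaptivelawlambda}--\eqref{eventAdaptivelawtheta} give bounded right-hand sides. This uses the projections \eqref{equation107}--\eqref{equation110} and the normalization by $1+\Upsilon$, so $\hat\lambda$ and $\hat\theta$ are absolutely continuous in $t$.
\item The kernels $\hat k=\hat{\mathcal K}(\hat\lambda)$, $\check\gamma$ and $\check h$ then depend continuously on time. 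For $\hat k$ this is Lemma \ref{lema1}, which includes control of $\frac{d}{dt}\hat{\mathcal K}$. For $\check\gamma$ and $\check h$ it follows from the $C^1$ dependence on $\hat\theta$ in the assumptions and from \eqref{eq:gamma}.
\item Hence $m(t_j)=\omega_0 d^2(t_j^-)$ is well defined and nonnegative.
\end{itemize}

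\textbf{Step 2: nonnegativity of $m$.} I would repeat the recursion from the first lemma, using the variation-of-constants formula for \eqref{c21} and \eqref{c211}.
\begin{itemize}
\item On $[t_j,t_j+\tau]$, $m$ is $m(t_j)e^{-\eta(t-t_j)}$ plus a nonnegative integral of $\kappa_1\|u\|^2$, so $m\ge0$ there.
\item On $(t_j+\tau,t_{j+1})$, the definition of $t_{j+1}$ as an infimum gives $m>0$ before the trigger, and $m(t_{j+1}^-)=0$ at it.
\item The reset $m(t_{j+1})=\omega_0 d^2(t_{j+1}^-)\ge0$ restarts the argument on the next interval.
\end{itemize}
So $m(t)\ge0$ for all $t$, independently of the adaptive structure.

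\textbf{Step 3: the dwell time is uniform.} I would reuse the auxiliary function $f$ from \eqref{f(t)}. Here $\tau$ is given by \eqref{taut0} and depends only on $a_0,a_1,a_2,\omega_0,\omega_1$, which are design constants. The only subtle point is that in the adaptive Lyapunov analysis these constants may have to be chosen using bounds on the kernels. Because of the projections, such bounds are uniform over the compact parameter set $\Pi$ and over $\|\hat\lambda\|_\infty\le\bar\lambda$, together with Assumption \ref{Assumption3}. They therefore do not depend on $t$ or on the initial data, so $\tau$ is a fixed positive number.

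\textbf{Conclusion and main obstacle.} Since $t_{j+1}-t_j\ge\tau>0$ for all $j$, we get $t_j\ge j\tau\to\infty$, which excludes Zeno behavior. The step I expect to be delicate is Step 1 together with the uniformity in Step 3, in particular:
\begin{itemize}
\item ensuring the closed-loop adaptive system has solutions on each interval up to the next event, so that $d(t_j^-)$ exists;
\item ensuring the constants fixing $\tau$ are independent of the trajectory.
\end{itemize}
I would handle both by appealing to the projection bounds and Lemma \ref{lema1}.
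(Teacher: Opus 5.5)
Your proposal is correct and follows essentially the paper's route. The paper's proof simply notes that the dwell-time argument behind \eqref{f(t)}--\eqref{taut0} is unaffected by redefining $d(t)$ via \eqref{c1701}, which gives $t_{j+1}-t_j\ge\tau$ and hence $t_j\ge j\tau\to\infty$. Your Steps 1--2 make explicit what the paper leaves implicit, namely the existence of $d(t_j^-)$ and $m\ge 0$.

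One caution concerns Step 3. Zeno exclusion only requires $\tau$ to be a fixed positive parameter of \eqref{c200}, so the uniformity argument there is unnecessary. As stated it is also not justified by projection alone: $\epsilon_4,\epsilon_5$ in \eqref{e41} involve $\hat{H}_t,\check{G}_t$ and hence $\dot{\hat{\theta}}$, which by \eqref{suptheta} contains a $u^2(0,t)$ term that projection does not bound. For the same reason, the $\hat{\theta}$ update law in Step 1 is controlled by $|u(0,t)|$ rather than uniformly bounded. It is still locally integrable, which is all you need there.
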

	
	\begin{proof}
		The proof follows from the same argument used to establish \eqref{f(t)} and \eqref{taut0}, since the modified definition preserves the required properties.
	\end{proof}
    
	Define the kernel approximate error $\tilde{k}(x,y,t)$ as 
	\begin{align}
		\tilde{k}(x,y,t)=&\check{k}(x,y,t)-\hat{k}(x,y,t).
	\end{align}
	According to \cite{1369395},  the following bound is guaranteed:
	\begin{align}
		\|  {k}(x,y,t) \|_{\infty} \leq  2 \|\lambda\|_{\infty} \mathrm{e}^{  4 \|\lambda\|_{\infty}  } . \label{42q1}
	\end{align}
	Using \eqref{42q1} and recalling \eqref{checkk}, \eqref{eventAdaptivelawlambda} and \eqref{equation107},  we can get
	\begin{align}
		\| \check{k}(x,y,t) \|_{\infty}& \leq  {2\bar{\lambda}} \mathrm{e}^{  4 \bar{\lambda}  } . \label{42q}
	\end{align}
	From \eqref{NOcontrol}--\eqref{c1701}, it follows that
	\begin{align}
		\dot{d}(t)	=&b_1(t) d(t)  +b_2(t) u(0,t)  +b_3(t) u(1,t) \notag\\
		&+ \int_{0}^{1} b_4(y,t) u(y,t)dy   +B_5(t) X(t)
		\label{c182}
	\end{align}
	where
	\begin{align}
		b_1(t) = & \hat{H}( 1,t) ,\label{vn11}\\
		b_2(t) = & \check{G}(t)B( {\theta})  - \hat{H}_y( 0,t )  ,\\
		b_3(t) =&  - \hat{H}_y( 1,t )-q\hat{H}(1,t ),\\
		b_4 (y,t)=& \hat{H}_{ t} (y,t ) +  \hat{H}_{yy} (y,t )  +  \hat{H}  (y,t ) \lambda(y)\notag\\
		&  + \hat{H}( 1 ,t)  \hat{H} ( y,t ) ,\\
		B_5(t)= &  \check{G}_{t}(  t )  +  \check{G}( t )  A(\theta) +  \hat{H}( 1 ,t) \check{G} (t).
	\end{align}
	Applying the Cauchy-Schwarz inequality into \eqref{c18}, we then obtain 
	\begin{align}
		\dot{d}(t)^2 \leq &\epsilon_1 d(t)^2+\epsilon_2  {u}^2(0,t)  +\epsilon_3 {u}^2(1,t) +\epsilon_4 \|u[t]\|^2  \notag\\
		&+\epsilon_5 |X(t)|^2  \label{ddt}
	\end{align} 	for $t \in\left(t_j, t_{j+1}\right)$, where	\begin{align}
		&\epsilon_1= 5 \bar{b}_1^2,  
		\ \epsilon_2= 5 \bar{b}_2^2 ,\ \epsilon_3= 5\bar{b}_3^2, \notag \\
		&	\epsilon_4=5 \bar{ b}_4 ^2 ,\
		\epsilon_5=5|\bar{B}_5|^2 \label{e41}
	\end{align}
	with $\bar{b}_i= \max_{t\in (t_j,t_{j+1})} \{ |b_i(t)|\}$ for $i=1,2,3$, $\bar{b}_4= \max_{t\in (t_j,t_{j+1}), y\in [0,1]}\{|b_4(y,t)|\} $, and $\bar{B}_5=\max_{t\in (t_j,t_{j+1})} \{|B_5(t)|\}$.
	\subsection{Main Result}
	The flow diagram of the signals in the closed-loop system consisting of
	the plant, the controller, the parameter esimator, the neural
	operator and the ETM is presented in Fig. \ref{fig1}. The main
	results of the neural operator-based adaptive event-triggered control design are shown as follows.
	\begin{figure}
		\hspace{0.1cm}
		\includegraphics[width=8cm]{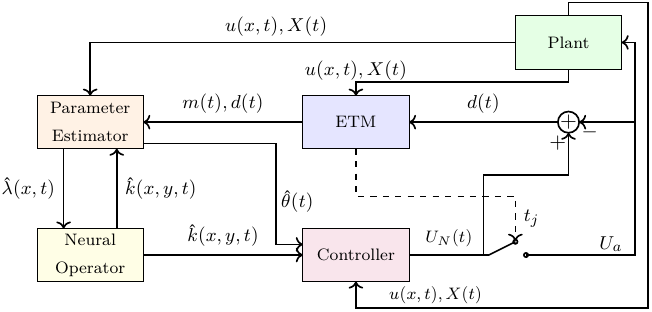} 	
		\caption{ The flow diagram of the signals in the closed-loop system under the \textcolor{black}{neural operator}-approximated adaptive event-triggered control.} 
		\label{fig1}
	\end{figure}

	\begin{theorem} \label{theorem2}
		For all initial conditions $u[0] \in L^2(0,1)$ and $X(0) \in \mathbb{R}^n$,	the closed-loop system, which consists of the plant \eqref{0}--\eqref{3} and
		the neural operator-based adaptive event-triggered control law \eqref{c162} that includes the event-triggering
		mechanism \eqref{c200}--\eqref{mtdefine10} where $d(t)$ is defined in \eqref{c1701}, the update law \eqref{eventAdaptivelawlambda}--\eqref{equation110}, and neural operator $\hat{\mathcal{K}}$, has the following properties:\\
		1) The estimated parameters $\hat{\lambda}(x,t), \hat{\theta}(t)$ and  neural kernel $\hat{k}(x,y,t)$ in closed-loop system are bounded for all $  t\in \mathbb{R}_+$  and $$\lim_{t\rightarrow \infty} \big(\|u[t]\|^2+|X(t)|^2+|d(t)|+m(t)\big)=0.$$ 	\\
		2) The neural operator-based adaptive event-triggered control signal is convergent to zero,
		i.e., $\lim_{t \rightarrow \infty} U_a(t) = 0$.\\
		3)  Additionally, there exist constants $\rho, R>0$ such that the stability estimate
		\begin{align}
			& \Gamma(t) \leq R\left(e^{\rho \Gamma(0)}-1\right)
		\end{align}
		where
		\begin{align}
			\Gamma(t) = & \|u[t]\|^2+ |X(t)|^2 + m(t) + f(t)d(t)^2\notag\\
			&+\|\tilde{\lambda}[t]\|^2 + \tilde{\theta}^T(t)\tilde{\theta}(t)  \label{Gamma}
		\end{align}
		holds for all $t \geq 0$.
	\end{theorem}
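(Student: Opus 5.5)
The argument follows the standard Lyapunov-based certainty-equivalence route of \cite{BHAN2025105968} and \cite{smyshlyaev2010adaptive}, combined with the switched ETM analysis already used for Theorem \ref{theorem1}.

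\textbf{Step 1: transformations and target system.} With the estimated kernels $\hat{k}=\hat{\mathcal{K}}(\hat{\lambda})$, $\check{\gamma}$ and $\check{h}$, we introduce the transformations
\begin{align}
\hat{w}(x,t) &= u(x,t)-\int_0^x\hat{k}(x,y,t)u(y,t)\,dy-\check{\gamma}(x,t)X(t),\notag\\
\hat{\beta}(x,t) &= \hat{w}(x,t)-\int_0^x\check{h}(x,y,t)\hat{w}(y,t)\,dy.\notag
\end{align}
We then derive the resulting target system. It has the structure of \eqref{targ1}--\eqref{targ4} with $\beta_x(1,t)=-r\beta(1,t)-d(t)$ as in \eqref{wt3}, but it carries extra perturbation terms:
\begin{enumerate}
\item terms linear in $\tilde{\lambda}$ and $\tilde{\theta}$, coming from $(\lambda-\hat{\lambda})u$ and $(A(\theta)-A(\hat{\theta}))X$, $(B(\theta)-B(\hat{\theta}))u(0,t)$;
\item time-derivative terms $\hat{k}_t$, $\check{\gamma}_t$, $\check{h}_t$, which are proportional to $\dot{\hat{\lambda}}$ and $\dot{\hat{\theta}}$;
\item kernel-approximation residuals bounded by $\iota$ through Lemma \ref{lema1}, namely the PDE residual, the diagonal mismatch and $\mathcal{K}_1-\tfrac{d}{dt}\hat{\mathcal K}$.
\end{enumerate}
Since the projections \eqref{equation107}, \eqref{equation110} confine $\hat{\lambda}$ and $\hat{\theta}$ to compact sets, \eqref{42q} and continuity in $\theta$ give uniform bounds on all kernels, on their inverses, and on $b_i$ and $B_5$. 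This justifies \eqref{ddt} with $t$-independent $\epsilon_i$. Together with Lemma \ref{lemareMDT}, it also establishes the first part of property 1), namely boundedness of $\hat{\lambda}$, $\hat{\theta}$ and $\hat{k}$.

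\textbf{Step 2: normalized Lyapunov functional.} We take
\[
V_1=\frac{r_a}{2}\|\hat{\beta}\|^2+r_bX^TP(\hat{\theta})X+m+fd^2,
\]
and the full functional
\[
\mathcal{V}=\ln(1+V_1)+\frac{1}{2r_1}\|\tilde{\lambda}\|^2+\frac{1}{2\bar r}\tilde{\theta}^T\tilde{\theta}.
\]
We define $\Upsilon(t)=V_1(t)$, i.e.\ the quantity in \eqref{Upsilon}, so the normalization $1/(1+\Upsilon)$ in \eqref{evl}, \eqref{eventAdaptivelawtheta} matches the derivative of $\ln(1+V_1)$. When differentiating, the $\tilde{\lambda}$- and $\tilde{\theta}$-linear terms of Step 1 are cancelled by the update laws. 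The adjoint term $\int_x^1\hat{k}(y,x)\hat\beta(y)\,dy$ in \eqref{evl} arises from moving $\int_0^x\hat{k}(x,y)\tilde{\lambda}u\,dy$ onto $\tilde{\lambda}$ by Fubini. Projection only adds nonpositive terms. For the remaining terms:
\begin{itemize}
\item The ETM terms are handled exactly as in the proof of Theorem \ref{theorem1}. We use \eqref{ddt}, the $f$-dynamics \eqref{f(t)}, the choice of $\lambda_d$ and $a_0$, and the bounds \eqref{eq145}--\eqref{eq74} rewritten with hats, separately on $(t_j,t_j+\tau)$ and $(t_j+\tau,t_{j+1})$.
\item The $\dot{\hat\lambda}$ and $\dot{\hat\theta}$ terms are bounded by $\|\hat{\lambda}_t\|+|\dot{\hat{\theta}}|$ times quadratic forms. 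Because the update laws carry the factor $1/(1+V_1)$, these contributions are at most $C\,V_1\sqrt{V_1}/(1+V_1)$-type terms, which after division by $1+V_1$ can be absorbed provided $r_1,r_2,r_3$ are small.
\item The $\iota$-terms are absorbed for $\iota$ small.
\end{itemize}
The goal is the estimate
\[
\dot{\mathcal V}\le-\frac{c\left(\|\hat\beta\|^2+\|\hat\beta_x\|^2+\hat\beta^2(1)+|X|^2+d^2+m\right)}{1+V_1}\le 0.
\]

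\textbf{Step 3: conclusions.} Because $\dot{\mathcal V}\le 0$, $\mathcal{V}$ is bounded, hence $V_1$ and $\Gamma$ are bounded. We convert between $\hat\beta$ and $u$ using the uniformly bounded inverse kernels, as in \eqref{equation70}. Exponentiating $\mathcal{V}(t)\le\mathcal{V}(0)$ gives $1+V_1(t)\le e^{\mathcal V(0)}$. Combining this with $\mathcal V(0)\le \rho\,\Gamma(0)$, obtained from $\ln(1+s)\le s$ and norm equivalence, yields property 3). For convergence, integrating $\dot{\mathcal V}$ gives $\|\hat\beta\|^2$, $|X|^2$, $d^2$ and $m$ in $L^1$, since $1+V_1$ is bounded. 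Their derivatives are bounded: $\tfrac{d}{dt}\|\hat\beta\|^2$ is bounded via the boundedness of $\|\hat\beta_x\|$ in $L^1$ together with the equation, and $\dot d$ is bounded by \eqref{ddt}. Barbalat's lemma, applied piecewise on intervals of length at least the dwell time $\tau$, then gives convergence to zero, which proves the limit in 1). Finally, $U_a(t)=U_N(t)-d(t)$, and $U_N$ is bounded by $C(\|u\|+|X|)\to0$, which gives 2).

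\textbf{Main obstacle.} The hardest part is Step 2: controlling the cross terms produced by the time-varying kernels $\check h_t$ and $\check\gamma_t$ together with the boundary term $-\hat\beta(1)d$ and the ETM, on both switching intervals simultaneously, using one constant choice. In particular, $b_4$ contains $\hat H_t$, which depends on $\hat\lambda_t$ and therefore on $u$ itself, so $\epsilon_4$ must be bounded using the normalization. My first attempt would be to bound $|\hat\lambda_t|\le r_1\bar{k}$-weighted $|u|\,|\hat\beta|/(1+V_1)\le C$ uniformly. This makes $\bar b_4$ a uniform constant, after which the ETM gains $a_0$ and $\lambda_d$ can be selected last.
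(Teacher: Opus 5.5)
Your outline follows the paper's proof step for step: the transformations \eqref{bretrans}, \eqref{bwc}, the logarithmic functional with $\Upsilon=V_1$ as in \eqref{V2t}, \eqref{Upsilon}, absorption of the $\iota$- and adaptation-rate terms with small $\iota,r_1,r_2$, Barbalat, and the bound $\Gamma(t)\le M_2(e^{M_3\Gamma(0)}-1)$. However, two steps that you treat as routine do not go through as written.

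\textbf{The cancellation step.} With your weights the linear terms do not cancel. The $\tilde\lambda$-term produced by $\frac{d}{dt}\ln(1+V_1)$ carries the factor $r_a$, so matching \eqref{evl} requires $\frac{r_a}{2r_1}\|\tilde\lambda\|^2$, not $\frac{1}{2r_1}\|\tilde\lambda\|^2$. The $\tilde\theta$-terms come from two places: the $\hat\beta$-equation (factor $r_a$) and the $X$-equation (factor $r_b$). In \eqref{eventAdaptivelawtheta} the corresponding regressors are weighted by $r_3$ and $2r_2$. A single term $\frac{1}{2\bar r}\tilde\theta^T\tilde\theta$ therefore cancels both only if $\bar r=r_2/r_b$ and $r_3=r_2r_a/r_b$, which is the paper's \eqref{r3}. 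So $r_3$ is not a free gain.

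There is a more serious problem. Since $\hat\beta=\hat w-\int_0^x\check h\,\hat w\,dy$, every perturbation $F$ in the $\hat w$-equation \eqref{1211} enters the $\hat\beta$-equation as $F(x)-\int_0^x\check h(x,y,t)F(y)\,dy$. Your sketch, which mentions only the $\hat k$-adjoint, keeps only $F$, and so does the paper's \eqref{hbeta1}. After Fubini, two corrections are needed:
\begin{itemize}
\item the $\tilde\lambda$-regressor must also contain the adjoint $\int_x^1\check h(y,x,t)\hat\beta(y,t)\,dy$, composed with the $\hat k$-adjoint;
\item the PDE part of the $\tilde\theta$-regressor must pair $\check\gamma$ with $\hat\beta(y)-\int_y^1\check h(s,y,t)\hat\beta(s)\,ds$ rather than with $\hat\beta$.
\end{itemize}
Neither correction appears in \eqref{evl} or \eqref{eventAdaptivelawtheta}. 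The leftover terms are $\tilde\lambda$ or $\tilde\theta$ times quadratic forms whose coefficients are of the same order as the dissipation. They therefore cannot be absorbed by small adaptation gains or small $\iota$, and the claimed cancellation fails for the laws as stated. The natural repair is to replace $\hat\beta$ by $\hat\beta-\int_x^1\check h(y,x,t)\hat\beta(y,t)\,dy$ in both regressors; this changes the later bounds only by constants.

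\textbf{Uniformity of the $\epsilon_i$.} Projection bounds $\hat\lambda$ and $\hat\theta$. But $b_4$ and $B_5$ contain $\hat H_t$ and $\check G_t$, hence $\hat\lambda_t$ and $\dot{\hat\theta}$, so Step 1 does not give $t$-independent $\epsilon_4,\epsilon_5$.

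Your proposed fix also fails. $\hat\lambda_t(x,t)$ involves the point values $u(x,t)\hat\beta(x,t)$, and $1+V_1$ does not control these, since it contains only $\|\hat\beta\|$, $|X|$, $m$ and $d$. What holds uniformly is only $\|\hat\lambda_t\|_{L^1}\le Cr_1$, which is the legitimate content of \eqref{lamt}--\eqref{hatkt}. That is not enough for $\hat H_t$, because $\check k_x(1,y)$ depends on $\hat\lambda$ pointwise. Likewise $\dot{\hat\theta}$ contains $u(0,t)$, cf.\ \eqref{suptheta}.

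The paper does not settle this either. It defines the constants in \eqref{e41} as suprema over each inter-event interval, so they depend on $j$, yet $a_0,\lambda_d,r_a,r_b$ are fixed from them. A workable route is to bound $\dot d^2$ by a state-dependent quadratic form with uniform constants, for example
\begin{itemize}
\item $|\int_0^1\hat H_t u\,dy|\le C\|\hat\lambda_t\|_{L^1}\|u[t]\|_\infty+\dots$;
\item $|\check G_tX|^2\le C(|X|^2+u^2(0,t))$, using the normalization.
\end{itemize}
Then absorb $\|u[t]\|_\infty^2$ and $u^2(0,t)$ into the $\|\hat\beta_x\|^2$ and $\hat\beta^2(1,t)$ dissipation via Agmon's inequality and small gains.
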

	\begin{proof}
		See the next details.
	\end{proof}
	Take the backstepping transformation
	\begin{align}
		\hat{w}(x, t)=u(x, t)-\int_0^x \hat{k}(x, y,t) u(y, t) d y-   \check{\gamma}(x,t) X(t).  \label{bretrans}
	\end{align}
	Applying the backstepping transformation \eqref{bretrans} into the original system \eqref{0}--\eqref{3},  we arrive at the following system:
	\begin{align}
		&  \dot{X}(t)  =    (A+BK) ( \hat{\theta}(t)) X(t) + B(\hat{\theta}(t)) \hat{w} (0,t) \notag\\
		& \qquad \quad+ A(\tilde{\theta}(t)) X(t) + B(\tilde{\theta}(t)) u (0,t), \\
		&  \hat{w}_t(x, t)=   \hat{w}_{x x}(x, t)-\check{\gamma}(x,t) B(\hat{\theta}(t)) \hat{w} (0,t) \notag\\
		&\qquad \qquad-\check{\gamma}(x,t) A(\tilde{\theta}(t))X(t)  - \check{\gamma}(x,t) B(\tilde{\theta}(t)) u (0,t)\notag\\
		& \qquad \qquad + (\tilde{\lambda}(x,t)) u(x,t) + 	\delta_{k 0}(x,t) u(x,t) \notag\\
		& \qquad \qquad - \int_{0}^{x} \hat{k}(x,y,t) \tilde{\lambda}(y,t) u(y,t) dy \notag\\	
		& \qquad \qquad  +\int_0^x \delta_{k 1}(x, y,t) u(y, t) d y \notag\\
		& \qquad \qquad -\int_0^x \hat{k}_t(x, y, t)  {u}(y, t) \mathrm{d} y - \check{\gamma}_t(x,t) X(t) , \label{1211} \\
		&  \hat{w}_x(0, t)=  \tilde{k}(0,0,t) \hat{w}(0,t) \label{1311}, \\
		&  \hat{w}_x(1, t)= U(t) -\wp \hat{w}(1,t)+ \tilde{k}(1,1,t) {u}(1,t)\notag\\
		& \qquad \qquad - (\wp \check{\gamma}(1,t)+\check{\gamma}_x(1,t) )X(t)\notag\\
		&\qquad \qquad -\int_{0}^{1}(\wp \hat{k}(1,y,t)+\hat{k}_x(1,y,t)) u(y,t) dy ,
	\end{align}
	where
	\begin{align}
		\delta_{k 0}(x,t)   =&-2 \frac{d}{d x}(\tilde{k}(x, x,t)), \\
		\delta_{k 1}(x, y,t)   =&-   \tilde{k}_{xx}(x, y,t)+    \tilde{k}_{yy}(x, y,t)+\hat{\lambda}(y,t) \tilde{k}(x, y,t),\\
		A(\tilde{\theta}(t)) = & \sum_{i=1}^{p} \tilde{\theta}_i (t) A_i, \\
		B(\tilde{\theta}(t)) = & \sum_{i=1}^{p} \tilde{\theta}_i (t) B_i.
	\end{align}
	Equations \eqref{supk} and \eqref{calk} imply that
	\begin{align}
		\|\delta_{k 0}\|_{\infty} \leq \iota, \label{delinf1}\\
		\|\delta_{k 1}\|_{\infty}  \leq \iota, \label{delinf2} \\
		\tilde{k}(x,y,t) \leq \iota. \label{delinf3}
	\end{align}
	Introducing the second transformation
	\begin{align}
		\hat{\beta}(x,t) = \hat{w}(x,t) - \int_0^x \check{h}(x,y) \hat{w}(y,t) dy \label{bwc}
	\end{align}
	with choosing the control input $U(t)$ as a neural operator-based adaptive event-triggered controller $U_a (t)$ given in \eqref{c162}, we derive the target system as
	\begin{align}
		\dot{X}(t) =&  ( A +B  K ) (\hat{\theta}(t)) X(t) + B(\hat{\theta}(t)) \hat{\beta} (0,t)\notag\\
		& + A(\tilde{\theta}(t)) X(t) +B(\tilde{\theta}(t)) u (0,t), \label{targ112} \\
		\hat{\beta}_t(x,t) = & \hat{\beta}_{xx} (x,t)     - \check{\gamma}(x,t) A(\tilde{\theta}(t)	) X( t) \notag \\
		&  - \check{\gamma}(x,t) B(\tilde{\theta}(t)	) u(0,t)  +  \tilde{\lambda}(x,t)  u(x,t)\notag\\
		& + \delta_{k 0} (x,t) u(x,t)+ \int_{0}^{x} \delta_{k 1}(x,y,t) u(y,t) dy \notag\\
		&  - \int_{0}^{x} \hat{k}(x,y,t)  \tilde{\lambda}(y,t) u(y,t) dy \notag\\
		& -\int_0^x \hat{k}_t(x, y, t)  {u}(y, t) \mathrm{d} y - \check{\gamma}_t(x,t) X(t) \notag  \\
		& - \int_{0}^{x} \check{h}_t(x,y,t) \hat{w}(y,t) dy, \label{hbeta1} \\
		\hat{\beta}_x(0,t) = & \tilde{k}(0,0,t) \hat{\beta}(0,t), \label{targ212}\\
		\hat{\beta}_x(1,t) = & -r\hat{\beta}(1,t)+ \tilde{k}(1,1,t) u(1,t)-d(t). \label{hbeta3}
	\end{align}
	Define the following constants for the bounds for both the   backstepping kernels and inverse backstepping kernels as
	\begin{align}
		& \|\hat{k}\|_{\infty}=\| \check{k}+\tilde{k} \|_{\infty} \leq 2 \bar{\lambda} e^{4 \bar{\lambda}}+\iota := \bar{k}, \label{bark}\\
		& \|\hat{k}^I\|_{\infty} \leq\|\hat{k}\|_{\infty} e^{\|\hat{k}\|_{\infty}} \leq \bar{k} e^{\bar{k}} := \bar{k^I} . \label{barl}
	\end{align}
	According to \cite{BHAN2025105968}, recalling \eqref{42q}, the following bound holds:
	\begin{align}
		\|\check{k}_t \|_{\infty} \leq & e^{4\bar{\lambda}} (1+2 \bar{\lambda} e^{4 \bar{\lambda} }) \| \hat{\lambda}_t  \| . \label{ckt}
	\end{align}
		Using \eqref{eventAdaptivelawlambda}, \eqref{evl}, we obtain
	\begin{align}
		\| \hat{\lambda}_t \| \leq  & \frac{r_1(1+\bar{k})}{r_a} \big(2(1+\bar{k^I})(1+\bar{h^I})+\bar{\gamma^I}\big) \notag\\
		& +\frac{r_1(1+\bar{k})\bar{\gamma^I}}{2r_b \underline{\lambda_{\min}} } . \label{lamt}
	\end{align}
	Applying the bound \eqref{lamt}, recalling \eqref{ckt} and \eqref{supk}, we then derive
	\begin{align} 
		\|\hat{k}_t\|_{\infty} \leq  \iota +   C_{k1}  \frac{r_1}{r_a} + C_{k2}  \frac{r_1}{r_b}   \label{hatkt}
	\end{align}
	where
	\begin{align}
		C_{k1}=& e^{4\bar{\lambda}} (1+\bar{k})^2 \big(2(1+\bar{k^I})(1+\bar{h^I})+\bar{\gamma^I}\big) ,\\
		C_{k2}=& e^{4\bar{\lambda}} (1+\bar{k})^2  \frac{  \bar{\gamma^I}}{2  \underline{\lambda_{\min}} }  .
	\end{align}
     are positive constants  independent of $t$ and design parameters.
	Recalling \eqref{eq:gamma} and \eqref{checkg}, there exists a positive constant $C_{\gamma}$ such that  
	\begin{align}
		\|\check{\gamma}_t\|_{\infty} \leq & C_{\gamma} \| \dot{\hat{\theta}} \|_{\infty}. \label{eq:gt}
	\end{align}
	\begin{lema}	\label{boundforkernel_t}
		The kernel time derivative $\check{h}_t$ exists and is uniformly bounded, i.e., for some positive constant  $C_h$
		\begin{align}
			{	\|\check{h}_t\|_{\infty} \leq} &  { C_{h} \| \dot{\hat{\theta}} \|_{\infty} }	\label{eq88} .
		\end{align}
	\end{lema}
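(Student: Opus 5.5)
We propose to prove Lemma \ref{boundforkernel_t} by reducing the $h$-kernel problem \eqref{eq:h1}--\eqref{eq:h4} to a Volterra integral equation whose data depend smoothly on $\theta$. Then we show that its solution map $\theta\mapsto h(\cdot,\cdot;\theta)$ is $C^1$ from the compact set $\Pi$ into $C^0(\mathcal{T})$ with uniformly bounded derivative. The chain rule $\check{h}_t=\partial_\theta h(\cdot,\cdot;\hat\theta(t))\,\dot{\hat\theta}(t)$ then yields \eqref{eq88}.

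\textbf{Step 1: reduction to an integral equation.} The wave equation \eqref{eq:h1} with the characteristic condition \eqref{eq:h2} and $h(0,0)=0$ gives the d'Alembert form $h(x,y)=\phi(x-y)$, where $\phi(0)=0$. Condition \eqref{eq:h2} forces the $x+y$ part to be constant, and that constant is zero by \eqref{eq:h4}. Substituting into \eqref{eq:h3} and using $h_y(x,0)=-\phi'(x)$ gives
\begin{align*}
\phi'(x)=\gamma(x;\theta)B(\theta)-\int_0^x \gamma(y;\theta)B(\theta)\,\phi(x-y)\,dy,\qquad \phi(0)=0 .
\end{align*}
This is a linear Volterra equation of the second kind for $\phi'$, with kernel $g(s;\theta):=\gamma(s;\theta)B(\theta)$. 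This agrees with the piecewise $C^2$ solution of \cite{8281082} invoked earlier.

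\textbf{Step 2: uniform bounds and differentiability in $\theta$.} From \eqref{eq:gamma}, $\gamma$ is a matrix exponential of a matrix built from $A(\theta)$, $B(\theta)$ and $K(\theta)$. By \eqref{eq:B} and Assumption 2 ($K\in C^1(\Pi)$), $g$ and $\partial_\theta g$ are continuous on $[0,1]\times\Pi$. Since $\Pi$ is compact (Assumption \ref{Assm1}), they are uniformly bounded, say by $\bar g$ and $\bar g_1$. Successive approximations (Gronwall) give $\|\phi\|_{C^1[0,1]}\le \bar g e^{\bar g}$ uniformly in $\theta$.

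Differentiating the Volterra equation formally in $\theta_i$ shows that $\psi_i:=\partial_{\theta_i}\phi$ solves the same type of equation, with the same kernel and forcing term
\begin{align*}
\partial_{\theta_i}g(x)-\int_0^x\partial_{\theta_i}g(y)\phi(x-y)\,dy .
\end{align*}
Hence $|\psi_i|\le \bar g_1(1+\bar g e^{\bar g})e^{\bar g}$. The formal differentiation is justified by showing that difference quotients converge, which is standard for linear Volterra equations with $C^1$ parameter dependence.

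\textbf{Step 3: the time derivative.} Set $\check h(x,y,t)=\phi(x-y;\hat\theta(t))$. The projection \eqref{equation110} keeps $\hat\theta(t)$ in a bounded set. We need that set to lie inside $\Pi$, or else we extend $K,P$ in $C^1$ to a neighborhood, which is harmless. Since $\hat\theta$ is absolutely continuous, $\check h$ is differentiable in $t$ almost everywhere and
\begin{align*}
\check h_t=\sum_{i=1}^p\psi_i(x-y;\hat\theta)\,\dot{\hat\theta}_i ,
\end{align*}
so $\|\check h_t\|_\infty\le C_h\|\dot{\hat\theta}\|_\infty$ with $C_h=\sqrt{p}\,\bar g_1(1+\bar g e^{\bar g})e^{\bar g}$.

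\textbf{Main obstacle.} The delicate point is Step 2's regularity in $\theta$. It requires the $C^1$ dependence of $K(\theta)$, and of the matrix exponential in \eqref{eq:gamma}, to hold uniformly over the projection region. It also requires that $\hat\theta$ never leaves the set where Assumption 2 holds. We would handle this by invoking compactness together with the projection bounds $\bar\theta_i$.
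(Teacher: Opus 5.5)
Your proposal is correct and follows essentially the same route as the paper's Appendix B. Both arguments use the d'Alembert reduction $\check h(x,y,t)=\phi(x-y,t)$ with $\phi(0,t)=0$, substitute it into \eqref{eq:h3} to obtain the Volterra-type equation for $\phi_x$ with $g=\gamma B$, and close with a Gr\"onwall bound on the differentiated equation. The only difference is cosmetic: you differentiate in $\theta$ and apply the chain rule, whereas the paper differentiates directly in $t$ (setting $\psi=\phi_t$ and bounding $g_t$ by $C_g\|\dot{\hat{\theta}}\|_{\infty}$), and your version is more careful about the existence of the derivative and about keeping $\hat\theta$ in a compact set where $K$ is $C^1$.
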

	\begin{proof}
		See Appendix \ref{Appendix3} for details.  
	\end{proof}

	The inverse transformation $  \hat{\beta} \mapsto \hat{w} \mapsto  {u}$ is given in the form
	\begin{align}
		{u}(x, t)= & \hat{w}(x, t)+\int_0^x \hat{k}^I(x, y,t) \hat{w}(y, t) d y + \check{\gamma}^I (x,t) X(t), \label{xit} \\
		\hat{w}(x,t) = & \hat{\beta}(x,t) + \int_{0}^{x} \check{h}^I(x,y) \hat{\beta}(y,t) dy .\label{wbh}
	\end{align}
	It follows that
	\begin{align}
		u(x,t) = & \hat{\beta}(x,t) + \int_{0}^{x} \check{h}^I(x,y) \hat{\beta}(y,t) dy + \check{\gamma}^I(x,t) X(t) + \notag\\
		&  \int_{0}^{x} \hat{k}^I(x,y,t) \big(\hat{\beta}(y,t) + \int_{0}^{y} \check{h}^I(y,z) \hat{\beta}(z,t) dz \big) dy.\label{eq128} 
	\end{align}
	  For $\hat{\beta}(x,t)$-subsystem \eqref{hbeta1}--\eqref{hbeta3}, we obtain
		\begin{align}
			\frac{d}{dt}  \frac{1}{2} \|\hat{\beta}[t]\|^2 = &- \|\hat{\beta}_x[t]\|^2 - r \hat{\beta}^2(1,t)- \hat{\beta}(1,t)  d(t) \notag\\
			& + \hat{\beta}(1,t) \tilde{k}(1,1,t) u(1,t) - \tilde{k}(0,0,t) \hat{\beta}^2(0,t) \notag\\
			& + \int_{0}^{1} \hat{\beta}(x,t) \bigg( - \check{\gamma}(x,t) A(\tilde{\theta}(t)	) X( t) \notag \\
			&  - \check{\gamma}(x,t) B(\tilde{\theta}(t)	) u(0,t)+  \tilde{\lambda}(x,t)  u(x,t) \notag\\
			& + \delta_{k 0}(x,t) u(x,t) + \int_{0}^{x} \delta_{k 1}(x,y,t) u(y,t) dy  \notag\\ 
			&  - \int_{0}^{x} \hat{k}(x,y,t)  \tilde{\lambda}(y,t)  u(y,t) dy \notag\\
			& -\int_0^x \hat{k}_t(x, y, t)  {u}(y, t) \mathrm{d} y - \check{\gamma}_t(x,t) X(t) \notag  \\
			& - \int_{0}^{x} \check{h}_t(x,y,t) w(y,t) dy   \bigg) dx,  \label{ddtbeta}
		\end{align}	
		 For $X(t)$-subsystem,  for  a positive definite matrix $P(\hat{\theta})$   to satisfy \eqref{PQmatrix}, we obtain
		\begin{align}
			\frac{d}{dt} X^T(t) P(\hat{\theta} ) X(t) = & - X^T(t) Q(\hat{\theta}) X(t) +2 X^T(t) P(\hat{\theta}) \notag\\
			& B(\hat{\theta}) \hat{\beta}(0,t)  +2 X^T(t) P(\hat{\theta}) A(\tilde{\theta}) X(t)\notag\\
			& +2 X^T(t) P(\hat{\theta}) B(\tilde{\theta} ) u(0,t) \notag\\
			& +  \sum_{i=1}^p   \dot{\hat{\theta}}_i   X^T(t) \frac{\partial P(\hat{\theta})}{\hat{\theta}_i} X(t). \label{ddtX}
		\end{align}
 Define a Lyapunov function as
	\begin{align}
		V (t) =& \ln \big(1+ \frac{r_a}{2} \|\hat{\beta}[t]\|^2 +  {r_b}   X^T(t) P(\hat{\theta}) X(t)  + m(t) \notag\\
		&+   f(t) d^2(t) \big) + \frac{r_a}{2 r_1} \|\tilde{\lambda}[t]\|^2 + \frac{r_b}{2r_2} \tilde{\theta}^T(t) \tilde{\theta}(t) , \label{V2t}
	\end{align}
	which is constructed to capture the energy of the PDE state, the parameter estimation errors, and the effects of event-triggered mechanism. 
	
	Using  \eqref{c21}, \eqref{ddtbeta}, and \eqref{ddtX},   we can obtain for $t \in (t_j, t_{j+1})$
	\begin{align}
		\dot{V}  = &	 \frac{ r_a}{1+\Upsilon(t)  } \times \bigg( - \|\hat{\beta}_x[t]\|^2 - r \hat{\beta}^2(1,t)- \hat{\beta}(1,t)  d(t) \notag\\
		& + \hat{\beta}(1,t) \tilde{k}(1,1,t) u(1,t) - \tilde{k}(0,0,t) \hat{\beta}^2(0,t) \notag\\
		& + \int_{0}^{1} \hat{\beta}(x,t) \big( - \check{\gamma}(x,t) A(\tilde{\theta}(t)	) X( t) \notag \\
		&  - \check{\gamma}(x,t) B(\tilde{\theta}(t)	) u(0,t)   + \tilde{\lambda}(x,t) u(x,t) \notag\\
		&  - \int_{0}^{x} \hat{k}(x,y,t)   \tilde{\lambda}(y,t)  u(y,t) dy  \big) dx  \notag\\
		& + \Delta_1(t) + \Delta_2(t) - \Delta_3(t) - \Delta_4(t) - \Delta_5(t) \bigg) \notag\\
		& +  \frac{ r_b}{1+\Upsilon(t)  } \times \big(  - X^T(t) Q(\hat{\theta}) X(t)  \notag\\
		&+2 X^T(t) P(\hat{\theta}) B(\hat{\theta}) \hat{\beta}(0,t)  +2 X^T(t) P(\hat{\theta}) A(\tilde{\theta}) X(t)\notag\\
		& +2 X^T(t) P(\hat{\theta}) B(\tilde{\theta} ) u(0,t)  +  \sum_{i=1}^p   \dot{\hat{\theta}}_i   X^T \frac{\partial P(\hat{\theta})}{\hat{\theta}_i} X \big) \notag\\
		&+  \frac{ 1}{1+\Upsilon(t)  } \times \big( -\eta m(t) +\kappa_1\| {u}[t]\|^2 +2f(t)d(t)\dot{d}(t)   \notag\\
		&+ \dot{f}(t) d^2(t) \big)-\frac{r_a }{r_1}\int_{0}^{1} \tilde{\lambda}(y,t)  \textcolor{black}{\hat{\lambda}_t (y,t)} dy -\frac{r_b}{r_2}  \tilde{\theta}(t)^T \textcolor{black}{\dot{\hat{\theta}}(t)} ,
	\end{align}
	for $t \in (t_j, t_j +\tau)$, where 
	\begin{align}
		\Upsilon (t)=&\frac{r_a}{2} \|\hat{\beta}[t]\|^2 +  {r_b}   X^T(t) P(\hat{\theta}) X(t) + m(t)+f(t) d^2(t),\label{Upsilon} \\
		\Delta_1=& \int_{0}^{1} \hat{\beta} (x,t) \delta_{k 0} (x,t) u(x,t) dx , \\
		\Delta_2=& \int_{0}^{1} \hat{\beta} (x,t) \int_0^x \delta_{k 1}(x,y,t) u(y,t) dy dx, \\
		\Delta_3=&  \int_{0}^{1} \hat{\beta}(x,t)     \int_0^x \hat{k}_t(x, y, t)  {u}(y, t)  {d} y dx  , \\
		\Delta_4=&  \int_{0}^{1} \hat{\beta}(x,t) \check{\gamma}_t(x,t) X(t) dx ,  \\
		\Delta_5=&  \int_{0}^{1} \hat{\beta}(x,t)   \int_{0}^{x} \check{h}_t(x,y,t) w(y,t) dy  dx	.
	\end{align}
		Recalling \eqref{equation105}, \eqref{eventAdaptivelawtheta},  and \eqref{Upsilon}, there exist  positive constants  $\bar{\theta_t} ,C_{\theta 1},C_{\theta 2},C_{\theta 3}$   independent of $t$ and design parameters such that
	\begin{align}
		\|\dot{\hat{\theta}}\|_{\infty} \leq & \bar{\theta_t}  +C_{\theta 3} \frac{r_2+r_3}{\Upsilon(t)} u^2(0,t)  \label{suptheta}
	\end{align}
where
\begin{align}
 \bar{\theta_t} =	C_{\theta 1} \frac{r_2}{\min\{ r_a, r_b \}} +   C_{\theta 2} \frac{r_3}{\min\{ r_a, r_b \}}.  \label{barthetat}
\end{align}
	Defining  
	$  M_p = \max _{1 \leq i \leq p} \sup _{\hat{\theta} \in \Pi}\left\|\frac{\partial P(\hat{\theta})}{\partial \hat{\theta}_i}\right\| $ 
	and using \eqref{suptheta}, one obtains
	\begin{align}
		\left|
		\sum_{i=1}^p \dot{\hat{\theta}}_i \, X^T \frac{\partial P(\hat{\theta})}{\partial \hat{\theta}_i} X
		\right|
		\le p\, M_p\, \|\dot{\hat{\theta}}\|_{\infty}\, |X(t)|^2. \label{equation171}
	\end{align}
		Choosing $r_3$ in \eqref{eventAdaptivelawtheta} as  
	\begin{align}
		r_3= \frac{r_2 r_a}{r_b}, \label{r3}
	\end{align}
    substituting the expressions of \eqref{eventAdaptivelawlambda}--\eqref{equation110} and \eqref{f(t)},  we obtain for $t \in (t_j, t_j +\tau)$
	\begin{align}
		\dot{V}  \leq &   \frac{ r_a}{1+\Upsilon (t) } \times \big( - \|\hat{\beta}_x[t]\|^2 - r \hat{\beta}^2(1,t)- \hat{\beta}(1,t)  d(t) \notag\\
		& + \hat{\beta}(1,t) \tilde{k}(1,1,t) u(1,t) - \tilde{k}(0,0,t) \hat{\beta}^2(0,t) \notag\\
		& + \Delta_1(t) + \Delta_2(t) - \Delta_3(t) - \Delta_4(t) - \Delta_5(t)  \big) \notag\\
		& +  \frac{ r_b}{1+\Upsilon (t) } \times \big(  - X^T(t) Q(\hat{\theta})  X(t) \notag\\
		& +  \sum_{i=1}^p   \dot{\hat{\theta}}_i   X^T \frac{\partial P(\hat{\theta})}{\hat{\theta}_i} X+2 X^T(t) P(\hat{\theta}) B(\hat{\theta}(t)) \hat{\beta} (0,t)  \big)  \notag\\
		&+  \frac{ 1}{1+\Upsilon(t)  } \times \big( -\eta m(t) +\kappa_1\| {u}[t]\|^2  +\frac{\dot{d}(t)^2}{a_2} \notag\\
		& -(a_0 + a_1 f(t)) d^2(t) \big)   .
	\end{align}
			Using \eqref{delinf1}--\eqref{delinf3},  \eqref{hatkt}--\eqref{eq128}, and \eqref{suptheta}, one derive that for some positive constants $C_3,C_4,C_5 $ independent of $t$ and design parameters,
	\begin{align}
		\Delta_1  \leq & \iota   \|\hat{\beta}\|^2 (1+\bar{h}+ \bar{k^I} + \bar{h} \bar{k^I}  )+\iota  \bar{\gamma^I}  \| \hat{\beta}\| |X(t)|, \label{delta1} \\
		\Delta_2  \leq & \iota   \|\hat{\beta}\|^2 (1+\bar{h}+ \bar{k^I} + \bar{h} \bar{k^I}  )+\iota  \bar{\gamma^I}  \| \hat{\beta}\| |X(t)|, \label{delta2} \\
		\Delta_3  \leq &  \bigg(\iota+    C_{k1}  \frac{r_1}{r_a} + C_{k2}  \frac{r_1}{r_b}  \bigg)   \big(     \|\hat{\beta}\|^2 (1+\bar{h}+ \bar{k^I} + \bar{h} \bar{k^I}  ) \notag\\
		&+  \bar{\gamma^I}  \| \hat{\beta}\| |X(t)|\big)  \notag\\
		\leq & \iota   \|\hat{\beta}\|^2 (1+\bar{h}+ \bar{k^I} + \bar{h} \bar{k^I}  )+\iota  \bar{\gamma^I}  \| \hat{\beta}\| |X(t)|\notag\\
		& + \frac{r_1}{\min\{ r_a, r_b \}} C_3 (\|\hat{\beta}\|^2 + |X(t)|^2) ,\\
		\Delta_4 \leq  &  { C_{\gamma} }	\|\dot{\hat{\theta}}\|_{\infty}    \| \hat{\beta}[t]\|  |X(t)| \notag\\
		 \leq &  C_4  \frac{r_2+r_3}{\min\{ r_a, r_b \}} (\|\hat{\beta}\|^2 + |X(t)|^2+u^2(0,t)) ,\\
		\Delta_5 \leq  &  { C_{h} } 	\|\dot{\hat{\theta}}\|_{\infty}   \| \hat{\beta}[t]\|   \|\hat{w}[t]\|   \notag\\
		\leq &  C_5 \frac{r_2+r_3}{\min\{ r_a, r_b \}} ( \| \hat{\beta}[t]\|^2  + u^2(0,t)). \label{Delta5}
	\end{align}
	One obtains from \eqref{xit} and \eqref{wbh} that
	\begin{align}
		u(0,t) = & \hat{\beta}(0,t) + \check{\gamma}^I(0,t) X(t)   ,\\
		u(1,t) = & \hat{\beta}(1,t) + \int_{0}^{1} \check{h}^I(1,y) \hat{\beta}(y,t) dy + \check{\gamma}^I(1,t) X(t) + \notag\\
		&  \int_{0}^{1} \hat{k}^I(1,y,t) \big(\hat{\beta}(y,t) + \int_{0}^{y} \check{h}^I(y,z) \hat{\beta}(z,t) dz \big) dy.
	\end{align}
	It can be easily derived that \eqref{eq148}--\eqref{eq74}  still holds true replacing $\beta$ with $\hat{\beta}$, i.e.,
	\begin{align}
		\hat{\beta}^2(0,t) \leq&   \hat{\beta}^2(1,t)+\|  \hat{\beta}_x[t]\|^2+\|  \hat{\beta}[t]\|^2, \label{eq1481}\\
		u^2(0,t) \leq  & 2  ( \hat{\beta}^2(1,t)+\|  \hat{\beta}_x[t]\|^2+\|  \hat{\beta}[t]\|^2 +   \bar{\gamma^I}^{2}  |X(t)|^2),   \label{eq1451} \\
		u^2(1,t) \leq & 6  \hat{\beta}^2(1,t)+  6 \bar{h^I}^2  \| \hat{\beta}[t]\|^2 + 3 \bar{k^I}^2  (1+\bar{h^I})^2 \| \hat{\beta}[t]\|^2  \notag\\
		& + 3  \bar{\gamma^I}^{2}  |X(t)|^2 , \label{eq1441} \\
		\|u[t]\|^2 \leq &   2 (1+\bar{k^I})^2 (1+\bar{h^I})^2 \| \hat{\beta}[t]\|^2 + 2 \bar{\gamma^I}^{2} |X(t)|^2  , \label{equation701} \\
		-\| \hat{\beta}_x[t]\|^2  \leq & \frac{1}{2}  \hat{\beta}^2(1,t) - \frac{1}{4} \| \hat{\beta}[t]\|^2. \label{eq741}
	\end{align}
	Using \eqref{ddt}, and  \eqref{equation171}--\eqref{eq741}, we obtain that for some positive constans $l_1, l_2, l_3, l_4$,
	\begin{align}
		\dot{V} \leq& \frac{1}{1+\Upsilon(t)} \times \bigg(  -\nu_1 \| \hat{\beta}[t]\|^2- \nu_2   \|\hat{\beta}_x[t]\|^2-\nu_3 \hat{\beta}^2(1,t) \notag\\
		& -\nu_4 |X(t)|^2 -\nu_5 d(t)^2 -\eta m(t) \bigg) \label{DotV}
	\end{align}
		for $t \in\left(t_j, t_{j}+\tau \right)$,where
	\begin{align}
		\nu_1 = &\frac{r_a}{8}-  \frac{2\epsilon_2}{a_2} - (3 \bar{k^I}^2  (1+\bar{h^I})^2 +  6 \bar{h^I}^2)  \frac{\epsilon_3}{a_2} -\frac{r_b}{4 \delta_2}-\iota l_1 \notag\\
		&- 2 (1+\bar{k^I})^2 (1+\bar{h^I})^2 (\kappa_1 +  \frac{\epsilon_4}{a_2})- \frac{r_1 r_a}{\min\{ r_a, r_b \}} C_3 \notag\\
		&-\frac{(r_2+r_3)r_a}{\min\{ r_a, r_b \}}   C_4- \frac{3(r_2+r_3)r_a}{\min\{ r_a, r_b \}}   C_5,  \label{nu1}\\
		\nu_2 = &\frac{r_a}{2}- \frac{2 \epsilon_2}{a_2}-\frac{r_b}{4 \delta_2}-\iota l_2- \frac{2(r_2+r_3)r_a}{\min\{ r_a, r_b \}}   C_5,  \\
		\nu_3 = &  {(r-\frac{1}{4}-\delta_1)} r_a - \frac{2\epsilon_2}{a_2}   -   \frac{6 \epsilon_3}{a_2} -\frac{r_b}{4 \delta_2}-\iota l_3\notag\\
		&- \frac{2(r_2+r_3)r_a}{\min\{ r_a, r_b \}}   C_5, \\
		\nu_4 = & r_b \lambda_{\min} (Q) -2 r_b \delta_2 \lambda_{\max} (PBB^TP^T) -  \frac{ 3  \bar{\gamma^I}^{2}  \epsilon_3}{a_2}   \notag\\
		&  -2 \bar{\gamma^I}^{2} (  \frac{\epsilon_2}{a_2} +\kappa_1 +  \frac{\epsilon_4}{a_2})-\iota l_4 -\frac{r_1 r_a}{\min\{ r_a, r_b \}} C_3\notag\\
		&-\frac{(r_2+r_3)r_a}{\min\{ r_a, r_b \}}   C_4-r_b p M_p \bar{\theta_t}- \frac{2(r_2+r_3)r_a}{\min\{ r_a, r_b \}}   C_5 , \label{nu4}\\
		\nu_5 = & a_1 f(t) +a_0-\frac{\epsilon_1}{a_2}-\frac{r_a  }{4 \delta_1}. \label{nu5}
	\end{align}
	Let $r_e = \frac{r_b}{r_a}$. Recalling \eqref{suptheta}, \eqref{barthetat}, and \eqref{r3}, we choose
	\begin{align}
		r_2 < & \frac{\underline{\lambda_{\min}} r_b \min\{ r_a, r_b \}  }{p M_p(r_b C_{\theta 1}+ {r_a} C_{\theta 2})} , \\
		r_1 < &  \frac{\frac{\min\{r_a,r_b\}}{8}-(r_2+r_3)(C_4+3C_5) }{C_3},\\
		\delta_2 < & \frac{\lambda_{\min}(Q)- p M_p \bar{\theta_t} }{2\lambda_{\max}(PBB^TP^T)} , \\
		r_b > & \frac{1}{\lambda_{\min}(Q)- 2\delta_2 \lambda_{\max} (PBB^TP^T)- p M_p \bar{\theta_t} } \times \notag\\
		&\bigg(   \frac{3  \bar{\gamma^I}^{2}   \epsilon_3}{a_2} +\frac{r_1}{\min\{ 1, r_e \}} C_3  +\frac{r_2+r_3}{\min\{1, r_e \}}   C_4   \notag\\
		& +\frac{2(r_2+r_3)}{\min\{1, r_e \}}   C_5  +2 \bar{\gamma^I}^{2} (  \frac{\epsilon_2}{a_2} +\kappa_1 +  \frac{\epsilon_4}{a_2}) \bigg),\\
		r_a > &    \frac{16 \epsilon_2}{a_2}  +\frac{2 r_b}{ \delta_2}+8(\frac{r_1}{\min\{ 1, r_e \}} C_3  +\frac{r_2+r_3}{\min\{ 1, r_e \}}   C_4  \notag\\
		&+\frac{3(r_2+r_3)}{\min\{ 1, r_e \}}   C_5 ) +    \frac{8 \epsilon_3}{a_2}(3 \bar{k^I}^2  (1+\bar{h^I})^2 +  6 \bar{h^I}^2)   \notag\\
		&+16 (1+\bar{k^I})^2 (1+\bar{h^I})^2 (\kappa_1 +  \frac{\epsilon_4}{a_2}) , \\
		\delta_1 < & \frac{(r-\frac{1}{4})r_a-   \frac{2 \epsilon_2}{a_2}   -  \frac{6 \epsilon_3}{a_2}-\frac{r_b}{4 \delta_2}- \frac{2(r_2+r_3) }{\min\{ 1, r_e \}}   C_5}{r_a}  ,\\
		a_0 \geq &  \frac{\epsilon_1}{a_2}+\frac{r_a  }{4 \delta_1} ,\\
		a_1 \geq & \eta ,
	\end{align}
	where $\underline{\lambda_{\min}}$ is defined in Assumption \ref{Assumption3}. Then, $\nu_1, \nu_2, \nu_3, \nu_4, \nu_5$ are positive   and
	\begin{align}
		\dot{V} \leq& \frac{1}{1+\Upsilon(t)} \times \bigg(  -\nu_1 \| \hat{\beta}[t]\|^2- \nu_2   \|\hat{\beta}_x[t]\|^2-\nu_3 \hat{\beta}^2(1,t) \notag\\
		& -\nu_4 |X(t)|^2 -\eta \big(f(t)d(t)^2+m(t) \big)\bigg) \label{equation190}
	\end{align}
	for $t \in\left(t_j, t_{j}+\tau\right)$.
	
	Similar to \eqref{DotV}, recalling \eqref{c211} and \eqref{f(t)}, choosing $	\lambda_d \geq  a_2 \omega_1^2+ \frac{\varepsilon_1}{a_2} +\frac{r_a }{4 \delta_0} + \eta  \omega_1^2$, we derive that \eqref{equation190} holds true for $t\in (t_j+\tau,t_{j+1})$.  Recalling \eqref{V2t}, the result \eqref{equation190} holds true for  $t \in\left(t_j, t_{j}+\tau\right)$  and for $t\in (t_j+\tau,t_{j+1})$  yields that $\|\hat{\beta}[t]\|^2, |X(t)|^2, m(t)+f(t)d(t)^2$  are bounded and integrable. Meanwhile, $\|\tilde{\lambda}[t]\|^2, \tilde{\theta}^T(t) \tilde{\theta}(t)$ are bounded.    Since $\omega_0 \leq f(t) \leq \omega_1$ and $m(t)>0$, we conclude that both $m(t)$ and $d(t)$ remain bounded.
	
	Taking the time derivative of $\frac{1}{2}\|\hat{\beta}[t]\|^2$, one obtains \eqref{ddtbeta}.  Using \eqref{ddt},    \eqref{supk}, \eqref{delta1}--\eqref{eq741}, and the fact that $\|\hat{\beta}[t]\|^2, |X(t)|^2, m(t)+f(t)d(t)^2, \|\tilde{\lambda}[t]\|^2, \tilde{\theta}^T(t) \tilde{\theta}(t)$  are all bounded, we conclude that $\frac{d}{dt}\frac{1}{2}\|\hat{\beta}[t]\|^2$ is bounded. Similarly, recalling \eqref{ddtX}, \eqref{c21}, \eqref{c211} and \eqref{f(t)}, we derive that  $\frac{d}{dt} |X(t)|^2$ and $ \frac{d}{dt}\big( m(t)+f(t)d(t)^2\big) $ are   bounded. Thus using Barbalat's lemma, we get $\|\hat{\beta}[t]\|^2$, $|X(t)|^2$, $m(t)$ and $d(t)$ tend to zero as $t \rightarrow \infty$.  This completes the first property of Theorem \ref{theorem2}. Recalling \eqref{c162}, we can straightforwardly obtain that the proposed neural operator-based adaptive event-triggered control input $U_a$ converges to zero as well. The second property of Theorem \ref{theorem2} is thus obtained.

	For the stability estimate, note that the Lyapunov function \eqref{V2t} satisfies
	\begin{align}
		\|\hat{\beta}[t]\|^2 \leq & \frac{2}{r_a} \left( e^{ V(t)}-1\right),\label{eq199} \\
		|X(t)|^2 \leq & \frac{1}{r_b \underline{\lambda_{\min}}}  \left( e^{ V(t)}-1\right),\\
		m(t)+f(t)d(t)^2 \leq &     e^{ V(t)}-1 , \\
		\|\tilde{\lambda}[t]\|^2   \leq & \frac{2r_1}{r_a} V(t) \leq \frac{2r_1}{r_a}  \left(e^{ V(t)}-1\right),\\
		\tilde{\theta}^T(t)\tilde{\theta}(t) \leq & \frac{2r_2}{r_b} V(t) \leq \frac{2r_2}{r_b}  \left(e^{ V(t)}-1\right) \label{eq203}
	\end{align} 
	for all $t \geq 0$. Further, using \eqref{equation701} and \eqref{eq199}--\eqref{eq203}, we obtain the following results
	\begin{align}
		\Gamma(t) \leq &   M_2 \left(e^{ V(t)}-1\right), \\
		V(t) \leq & M_3 \Gamma(t),
	\end{align}
	where
	\begin{align}
		M_2 = &\max \left \{  \frac{4 (1+\bar{k^I})^2 (1+\bar{h^I})^2 }{r_a} , \frac{2 \bar{\gamma^I}^{2}+1}{r_b \underline{\lambda_{\min}}},\frac{2r_1}{r_a} , \frac{2r_2}{r_b}   \right\} , \\
		M_3 = &\max \bigg \{   { r_a (1+\bar{k})^2 (1+\bar{h})^2 } , \frac{r_a}{2r_1} , \frac{r_b}{2r_2}, \notag\\
		&  \qquad \ \  r_b  (2 \bar{\gamma }^{2}+1){ \overline{\lambda_{\max}}}  \bigg \},
	\end{align} 
	where $\overline{\lambda_{\max}}$ is defined in Assumption \ref{Assumption3}, resulting in the  stability estimate
	\begin{align}
		\Gamma(t) & \leq M_2 \left(e^{M_3 \Gamma(0)}-1\right) .
	\end{align}
	This completes the proof of Theorem \ref{theorem2}.
	 
     In the implementation of a \textcolor{black}{neural operator}-approximated adaptive event-triggered controller for the reaction-diffusion PDE-ODE cascade system,  the design steps and results are shown in  Fig. \ref{fig12}.
	 	\begin{figure}
	 	\hspace{0.1cm}
	 	\includegraphics[width=8cm]{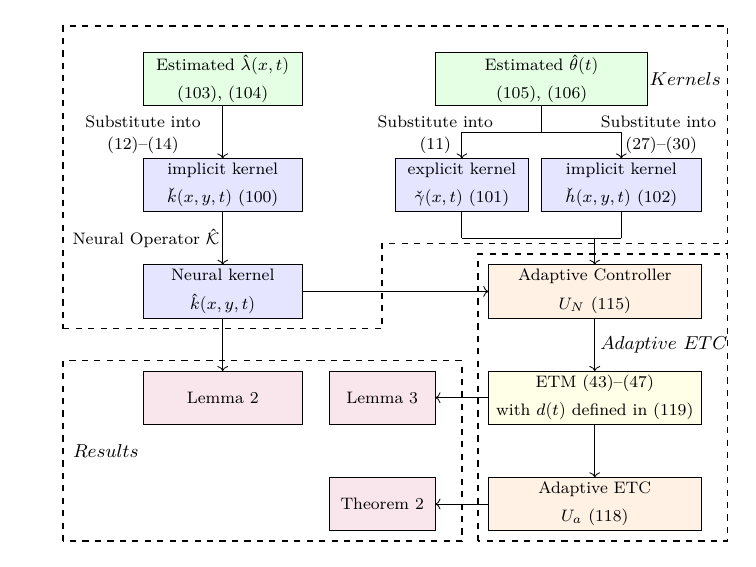} 	
	 	\caption{The design process and results under neural operator-based adaptive event-triggered control. } 
	 	\label{fig12}
	 \end{figure}
	 
     The following result shows that the neural operator is introduced
 primarily for computational efficiency, while the closed-loop
 stability can still be guaranteed when the exact kernel function
 is directly utilized.
 \begin{theorem}[\textcolor{black}{Neural operator}-free Stability Result]\label{theorem3}
 	Consider the closed-loop system consisting of the plant
 	\eqref{0}--\eqref{3}, the adaptive event-triggered controller,
 	and the update laws
 	\eqref{eventAdaptivelawlambda}--\eqref{equation110}.
 	Suppose that the neural operator approximation is not employed,
 	and the exact kernel function $\check{k}(x,y,t)$ is directly used
 	in place of the neural kernel $\hat{k}(x,y,t)$
 	in \eqref{c162} and \eqref{c1701}.
 	
 	Then, all conclusions of Theorem~\ref{theorem2} remain valid.
 	In particular:
 	
 	\begin{enumerate}
 		\item The estimated parameters
 		$\hat{\lambda}(x,t)$ and $\hat{\theta}(t)$
 		remain bounded for all $t\in\mathbb{R}_+$, and
 		$$
 		\lim_{t\rightarrow\infty}	\left(\|u[t]\|^2+|X(t)|^2+|d(t)|+m(t)\right)=0.
 		$$
 		
 		\item The adaptive event-triggered control input converges to zero.
 		
 		\item There exist constants $\rho_2,R_2>0$ such that
 		\begin{align}
 				\Gamma(t)\leq  	R_2\left(e^{\rho_2\Gamma(0)}-1\right), 			
 		\end{align} 	
 		where $\Gamma(t)$ is defined in \eqref{Gamma}.
 	\end{enumerate}
 \end{theorem}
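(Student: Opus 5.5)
The plan is to repeat the proof of Theorem~\ref{theorem2} with the approximation error set identically to zero, $\tilde{k}\equiv 0$. Equivalently, this is the special case $\iota=0$ of Lemma~\ref{lema1}. We use the transformation \eqref{bretrans} with $\check{k}$ in place of $\hat{k}$, followed by \eqref{bwc}. Because $\check{k}$ solves \eqref{k1}--\eqref{k3} exactly with $\lambda$ replaced by $\hat{\lambda}(\cdot,t)$, we get $\delta_{k0}\equiv 0$ and $\delta_{k1}\equiv 0$. The boundary terms $\tilde{k}(0,0,t)\hat{\beta}(0,t)$ and $\tilde{k}(1,1,t)u(1,t)$ also vanish. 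The target system \eqref{targ112}--\eqref{hbeta3} therefore reduces to one with $\hat{\beta}_x(0,t)=0$ and $\hat{\beta}_x(1,t)=-r\hat{\beta}(1,t)-d(t)$, while keeping the terms driven by $\tilde{\lambda}$, $\tilde{\theta}$, $\check{k}_t$, $\check{\gamma}_t$ and $\check{h}_t$.

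Next, the bounds used in the Lyapunov analysis must be re-established without the neural operator. Bound \eqref{42q} gives $\|\check{k}\|_\infty\le 2\bar{\lambda}e^{4\bar{\lambda}}$, so \eqref{bark}--\eqref{barl} hold with $\iota=0$. Bound \eqref{ckt} together with \eqref{lamt} gives \eqref{hatkt} with $\iota=0$. Lemma~\ref{boundforkernel_t}, \eqref{eq:gt} and \eqref{suptheta} are independent of the neural operator and carry over unchanged. The derivative $\dot d$ is computed as in \eqref{c182}, with $\hat{H}$ replaced by the exact-kernel analogue $\check{H}$. The coefficients $b_i$ remain uniformly bounded because $\check{k}$, $\check{k}_x$, $\check{k}_t$ and $\check{h}$ are bounded on $\Pi$ and on the projection set of $\hat{\lambda}$. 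Hence \eqref{ddt} holds with constants $\epsilon_i$. The same ETM argument as in Lemma~\ref{lemareMDT} excludes Zeno behavior, since the dwell time $\tau$ in \eqref{taut0} depends only on $a_0,a_1,a_2,\omega_0,\omega_1$.

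We then take the same Lyapunov functional \eqref{V2t} and the same update laws, with $\hat{k}$ replaced by $\check{k}$ in \eqref{evl}. The cancellation of the $\tilde{\lambda}$ and $\tilde{\theta}$ cross terms through \eqref{eventAdaptivelawlambda}--\eqref{equation110} and the choice \eqref{r3} goes through verbatim. The projection terms add only nonpositive contributions. The estimate \eqref{DotV} follows with $\nu_1,\dots,\nu_5$ given by \eqref{nu1}--\eqref{nu5} at $\iota=0$, where $\Delta_1=\Delta_2=0$. Every gain condition stated after \eqref{nu5} stays feasible, and is in fact easier to meet, so \eqref{equation190} holds on $(t_j,t_j+\tau)$ and on $(t_j+\tau,t_{j+1})$ for the same choice of $\lambda_d$.

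From here the Barbalat argument and the bounds \eqref{eq199}--\eqref{eq203} are copied directly. They give items 1) and 2) and the estimate $\Gamma(t)\le M_2(e^{M_3\Gamma(0)}-1)$, so we take $R_2=M_2$ and $\rho_2=M_3$ with $\bar{k}=2\bar{\lambda}e^{4\bar{\lambda}}$. The main point needing care is that the uniform bounds on $b_4$ and $B_5$, which involve $\check{H}_t$ and $\check{G}_t$, still hold. This requires $\check{k}_t$ to be bounded along the closed loop. I will secure this using \eqref{ckt} together with the normalized, uniformly bounded $\hat{\lambda}_t$ from \eqref{lamt}, and do the same for $\check{h}_t$ via Lemma~\ref{boundforkernel_t}. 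The normalization by $1+\Upsilon$ is what prevents any circular dependence on the state.
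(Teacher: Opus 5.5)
Your route is the paper's route. Its proof of Theorem~\ref{theorem3} only observes that with $\check{k}$ in place of $\hat{k}$ the approximation error disappears, and then reruns Theorem~\ref{theorem2}. Your $\iota=0$ bookkeeping is consistent with that ($\delta_{k0}=\delta_{k1}=0$, no $\tilde{k}$ boundary terms, $\Delta_1=\Delta_2=0$, $\bar{k}=2\bar{\lambda}e^{4\bar{\lambda}}$, $R_2=M_2$, $\rho_2=M_3$).

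However, the argument you give does not secure the step you single out as delicate. Lemma~\ref{boundforkernel_t} and \eqref{eq:gt} only give $\|\check{h}_t\|_\infty,\|\check{\gamma}_t\|_\infty\lesssim\|\dot{\hat{\theta}}\|_\infty$. By \eqref{eventAdaptivelawtheta}, $\dot{\hat{\theta}}$ contains $\frac{r_3}{1+\Upsilon}\int_0^1\hat{\beta}\check{\gamma}\,dy\,B_iu(0,t)$ and $\frac{2r_2}{1+\Upsilon}X^TP(\hat{\theta})B_iu(0,t)$, which are of size $|u(0,t)|$. This is where the $u^2(0,t)/\Upsilon$ term in \eqref{suptheta} comes from. $\Upsilon$ contains $\|\hat{\beta}[t]\|^2$ and $|X|^2$ but not the trace $u(0,t)$. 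So the normalization does not make $\dot{\hat{\theta}}$ uniformly bounded, hence not $\check{h}_t$, $\check{\gamma}_t$, $\check{G}_t$ or the coefficients $b_4,B_5$ either. For $u[0]\in L^2$ the trace need not even stay bounded as $t\to 0^+$. Consequently $\epsilon_4,\epsilon_5$ in \eqref{ddt} are not constants. Similarly, the normalization controls $\hat{\lambda}_t=r_1u(\cdots)/(1+\Upsilon)$ only in $L^1$, not in the $L^2$ sense of \eqref{lamt}.

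The fix is to estimate products rather than coefficients. The $\dot{\hat{\theta}}$-dependent part of $\int_0^1 b_4u\,dy+B_5X$ is at most $C|\dot{\hat{\theta}}|(\|u[t]\|+|X|)\le C\frac{(\|\hat{\beta}[t]\|+|X|)^2}{1+\Upsilon}(|X|+|u(0,t)|)\le C'(|X|+|u(0,t)|)$. This merely enlarges $\epsilon_2,\epsilon_5$ by $t$-independent amounts, and it is absorbed through \eqref{eq1451} as before. The $\hat{\lambda}_t$-terms need the same product treatment. The paper has the same lacuna: its $\bar{b}_i$ are maxima over $(t_j,t_{j+1})$, hence $j$-dependent. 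So this is not a departure from the paper, but your stated justification does not close it.

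The cancellation of the $\tilde{\lambda}$- and $\tilde{\theta}$-cross terms, which you take over verbatim, is also flawed at the source. Write $\hat{w}_t=\hat{w}_{xx}-\check{\gamma}B(\hat{\theta})\hat{w}(0,t)+F$, with $F$ collecting the $\tilde{\lambda}$, $\tilde{\theta}$, $\check{k}_t$ and $\check{\gamma}_t$ terms. Then the transformation \eqref{bwc} gives $\hat{\beta}_t=\hat{\beta}_{xx}+F-\int_0^x\check{h}(x,y,t)F(y,t)\,dy-\int_0^x\check{h}_t(x,y,t)\hat{w}(y,t)\,dy$. The middle integral is missing from \eqref{hbeta1}. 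Hence in $\frac{d}{dt}\frac{1}{2}\|\hat{\beta}[t]\|^2$ the forcing $F$ is paired with $\hat{\beta}(x,t)-\int_x^1\check{h}(y,x,t)\hat{\beta}(y,t)\,dy$ rather than with $\hat{\beta}$. The laws \eqref{evl} and \eqref{eventAdaptivelawtheta} therefore leave sign-indefinite residuals of order $\frac{r_a}{1+\Upsilon}\bar{h}\big(\bar{\lambda}\|u[t]\|+|\tilde{\theta}|(|X|+|u(0,t)|)\big)\|\hat{\beta}[t]\|$. These scale with $r_a$ exactly like the damping terms and involve parameter errors that are not small, so in general no choice of gains absorbs them. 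Closing the argument requires replacing $\hat{\beta}$ by $\hat{\beta}-\int_x^1\check{h}(y,x,t)\hat{\beta}(y,t)\,dy$ in both update laws, i.e.\ modifying the laws named in the statement. Copying Theorem~\ref{theorem2} verbatim reproduces this omission rather than repairing it.
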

 
 \begin{proof}
 	The proof follows similarly to that of
 	Theorem~\ref{theorem2}.	When the exact kernel function $\check{k}(x,y,t)$ is directly employed,
 	the neural approximation error associated with the neural operator
 	$\hat{k}(x,y,t)$ is eliminated.
 	Therefore, all Lyapunov estimates and stability arguments derived in
 	Theorem~\ref{theorem2} remain valid without modification. 	In particular, the closed-loop Lyapunov functional still satisfies
 	the same differential inequalities as in the proof of
 	Theorem~\ref{theorem2}, which guarantees boundedness of all adaptive
 	parameters and asymptotic convergence of the closed-loop system states in the sense of $L^2$ norm.
 	Furthermore, the same argument establishes the stability estimate for
 	$\Gamma(t)$. 	Hence, all conclusions stated above hold.
 \end{proof}
	
	\section{Numerical Simulation} \label{sec6}
	\subsection{Model and Design Parameters}

	In the simulation example, we consider the reaction--diffusion PDE with $\lambda(x)=10\cos(9\cos^{-1}x)+10, \qquad q=15,$	under the initial condition $u(x,0)=20.$ The ODE subsystem is chosen as
	 \[
   \begin{aligned}
       & A(\theta)=A_0+\theta A_1
		=
		\left(
		\begin{array}{cc}
			0 & 1 \\
			0 & 0
		\end{array}
		\right)
		+
		\theta
		\left(
		\begin{array}{cc}
			1 & 0 \\
			0 & 0
		\end{array}
		\right), \\
		& B=B_0=
		\binom{0}{1},
   \end{aligned}     
     \]
		with initial condition $X(0)=(x_1(0),x_2(0))^T=(1,1)^T.$
        
        Using the backstepping design, we construct the triple $(K,P,Q)$ as
	\[
	\begin{aligned}
		& K(\theta)
		=
		-\left(
		1+(\theta+1)^2 \quad \theta+2
		\right), \\
		& P(\theta)
		=
		\frac{1}{2}Q(\theta)
		=
		\left(
		\begin{array}{cc}
			1+(1+\theta)^2 & 1+\theta \\
			1+\theta & 1
		\end{array}
		\right),
	\end{aligned}
	\]
	which satisfies the Lyapunov equation \eqref{PQmatrix}.
	
	The numerical simulations are carried out using a finite-difference scheme. The spatial and temporal discretization steps are chosen as
	$
	\Delta x = 0.02, \ \Delta t = 1\times10^{-4}.
	$
	
	The parameters in the event-triggered mechanism, which are used for the nominal ETC, adaptive ETC, and neural operator-based adaptive ETC, are selected as $\eta=9.775,
	\ 
	\kappa_1=2240,
	\ 
	\lambda_d=375,
	\ 
	\omega_0=0.8.$
	The minimum dwell time enforced in the ETM is chosen as
	$
	\tau = 5\Delta t.
	$ 
		  To estimate the unknown parameters $\lambda(x)$ and $\theta$, the adaptation gains in \eqref{eventAdaptivelawlambda}--\eqref{eventAdaptivelawtheta} are selected as
		  $ r_1=900,
		 \ 
		 r_2=90,
		 \ 
		 r_3=225,
		 \ 
		 r_a=500,
		 \ 
		 r_b=200,$
		 and the projection bounds are chosen as
		  $\bar{\lambda}=30,
		 \ 
		 \bar{\theta}=2.$

		 \subsection{Simulation Results}
		 
	  It is easy to verify that the system is open-loop unstable.  Applying the proposed nominal ETC law \eqref{c16},    Fig.~\ref{figETCCTC} demonstrates that both the PDE state $u(x,t)$ and the ODE state $X(t)$ converge to zero. For clearer visualization, the trajectories of the ETC signal, the continuous-in-time signal, and the dynamic variable $m(t)$ over the interval from $10\,\mathrm{s}$ to $11\,\mathrm{s}$ are presented in Fig.~\ref{fignominalETC_control_m_zoom}.  The minimum triggering interval observed in the simulation is $ 8.1\times10^{-3}\,\mathrm{s}, $  while the average triggering interval is	 $ 5.79\times10^{-2}\,\mathrm{s}. $

    Applying the adaptive ETC law without neural operator approximation, Figs.~\ref{figAdaptiveETCCTC} shows that both the PDE state $u(x,t)$ and the ODE state $X(t)$ converge to zero.   The ETC signal, continuous-in-time signal, and dynamic variable $m(t)$ under adaptive ETC are shown in  Fig. \ref{figadaptiveETC_control_m_zoom}.       The minimum triggering interval observed in the simulation is
   $
   4.4\times10^{-3}\,\mathrm{s},
   $
   and the average triggering interval is
   $
   3.44\times10^{-2}\,\mathrm{s}.
   $  The estimated parameters $\hat{\lambda}(x,t)$ and $\hat{\theta}(t)$ are shown in Figs.~\ref{figadaptiveETClambda} and \ref{figadaptiveETCtheta}.
   
   To construct a neural operator approximation for the gain kernel $\check{k}(x,y,t)$ associated with unknown parameter estimates $\hat{\lambda}(x,t)$, a dataset representing parameter trajectories likely to arise during closed-loop operation must first be generated.  Specifically, we aim to learn the operator mapping
   $
   \hat{\mathcal{K}}:
   \hat{\lambda}(x,t)
   \mapsto
   \hat{k}(x,y,t).
   $   
   To generate the training data, we sample 10 different functions of the form
   $
   \lambda(x)
   =
   10\cos(\sigma\cos^{-1}x)+10,
   $
   where $\sigma$ is uniformly sampled from the interval $(8.5,9.5)$. For each sampled plant, the adaptive ETC closed-loop system is simulated using the finite-difference kernel solver.  Pairs of the form
   $
   (\hat{\lambda}(x,t),\check{k}(x,y,t))
   $
   are sampled every 100 time steps over the first 30 seconds of simulation, yielding a total of $30000$  $(\hat{\lambda}(x,t), \check{k}(x,y,t))$  pairs for training and testing.
   
   The entire dataset generation process requires more than four hours on an NVIDIA A6000 GPU for only 10 plants, which further motivates the use of neural operator approximations for real-time kernel evaluation.   For the neural operator architecture, we employ a DeepONet model in which the branch network is implemented as a convolutional neural network and the trunk network is implemented as a fully connected feedforward neural network.
   
    As shown in Figs. \ref{figDeepAdaptiveETCCTC} and  \ref{figDeepadaptiveETC_control_m_zoom}, the controller constructed using the DeepONet-approximated kernels achieves stabilization performance comparable to that obtained using the exact finite-difference kernels. This indicates that the learned kernels provide sufficient accuracy for closed-loop implementation.  The minimum triggering interval observed in the simulation is
  $   5.3\times10^{-3}\,\mathrm{s}, $  while the average triggering interval is  $
   3.55\times10^{-2}\,\mathrm{s}.
   $    
   The estimated parameters $\hat{\lambda}(x,t)$ and $\hat{\theta}(t)$ are shown in Figs.~\ref{figDeepadaptiveETClambda} and \ref{figDeepadaptiveETCtheta}.  The trajectories of $\hat{k}(1,y,t)$ and the kernel approximation error $\tilde{k}(1,y,t)$ are shown in Fig.~\ref{figDeepadaptiveETCk}.
   
   In this simulation, the average computation time for 100 evaluations of the kernel $k$ is $2.19\,\mathrm{ms}$ using the finite-difference method and $0.43\,\mathrm{ms}$ using the neural operator approximation, corresponding to a speedup of approximately $5.09\times$.  Computation times for different spatial discretization resolutions are summarized in Table~\ref{table_kernel_time}.

	\begin{figure}[!t]
		\centering
		\subfloat[Results for $ \|u\|_{L^2}+ |X(t)|^2$]{
			\includegraphics [width=4cm] {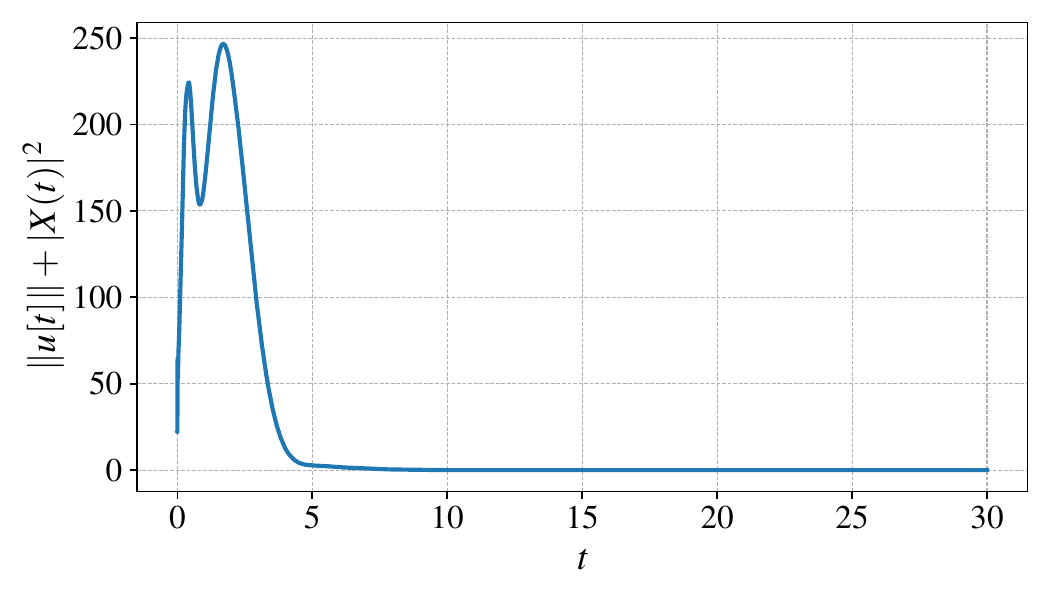}
			\label{figETCCTC}	
		}
		\subfloat[Trajectories involved in ETC signal, CTC signal and dynamic variable $m(t)$ from $10s$ to $11s$.]{
			\includegraphics [width=4cm] {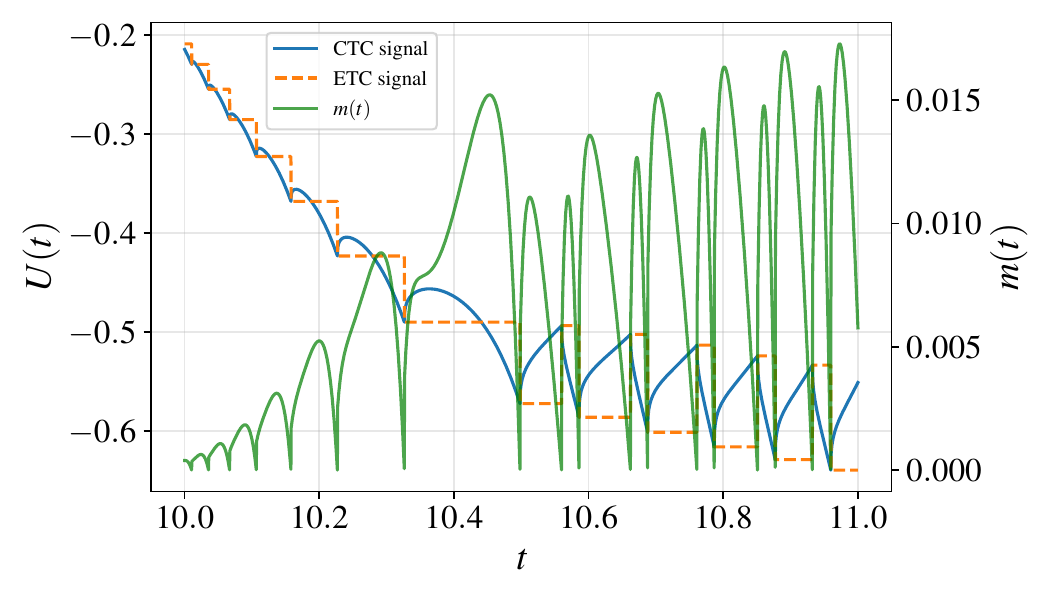}
			\label{fignominalETC_control_m_zoom}	
		} \\
		\caption{Results in closed-loop system with nominal ETC.} 
		\subfloat[Results for $ \|u\|_{L^2}+ |X(t)|^2$]{
			\includegraphics [width=4cm] {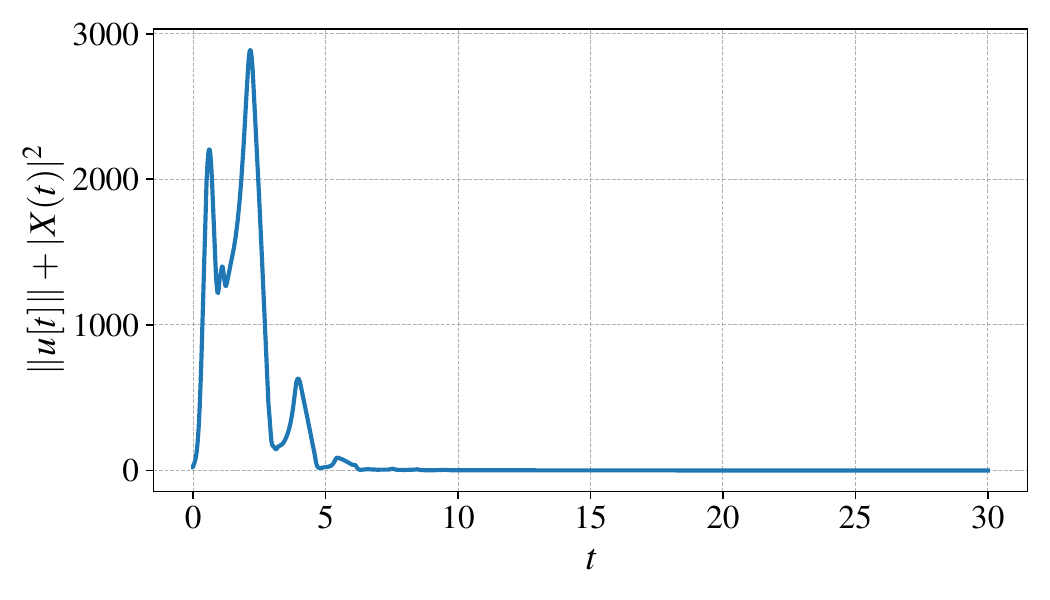}
			\label{figAdaptiveETCCTC}	
		}
		\subfloat[Comparison of ETC signal, CTC signal and dynamic variable $m(t)$ from $10s$ to $11s$.]{
			\includegraphics [width=4cm] {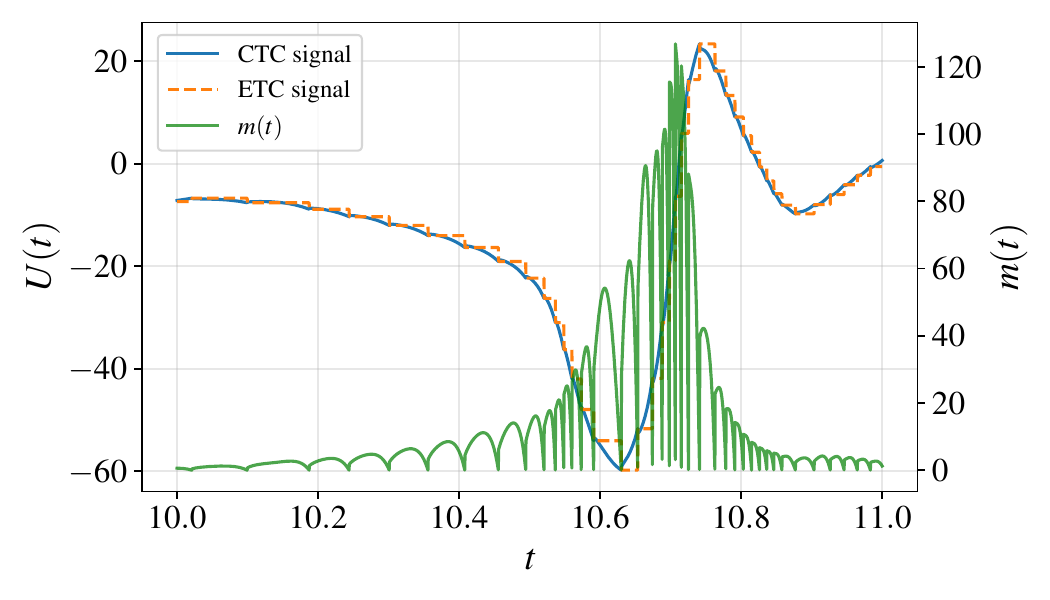}
			\label{figadaptiveETC_control_m_zoom}	
		} \\
		\caption{Results in closed-loop system with adaptive  ETC.}
		\subfloat[Results for $ \|u\|_{L^2}+ |X(t)|^2$]{
			\includegraphics [width=4cm] {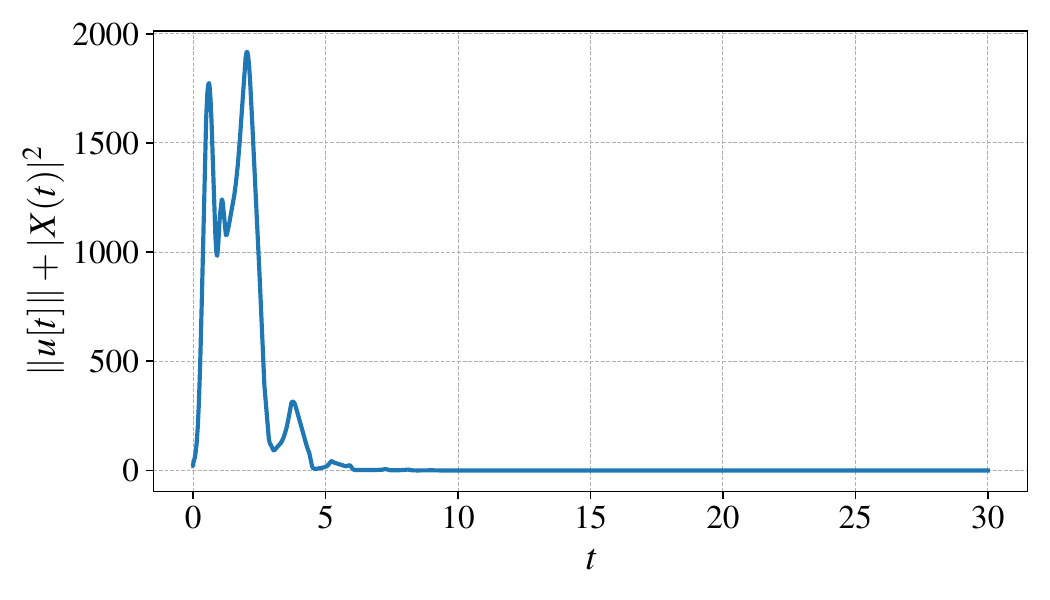}
			\label{figDeepAdaptiveETCCTC}	
		}
		\subfloat[Comparison of ETC signal, CTC signal and dynamic variable $m(t)$ from $10s$ to $11s$.]{
			\includegraphics [width=4cm] {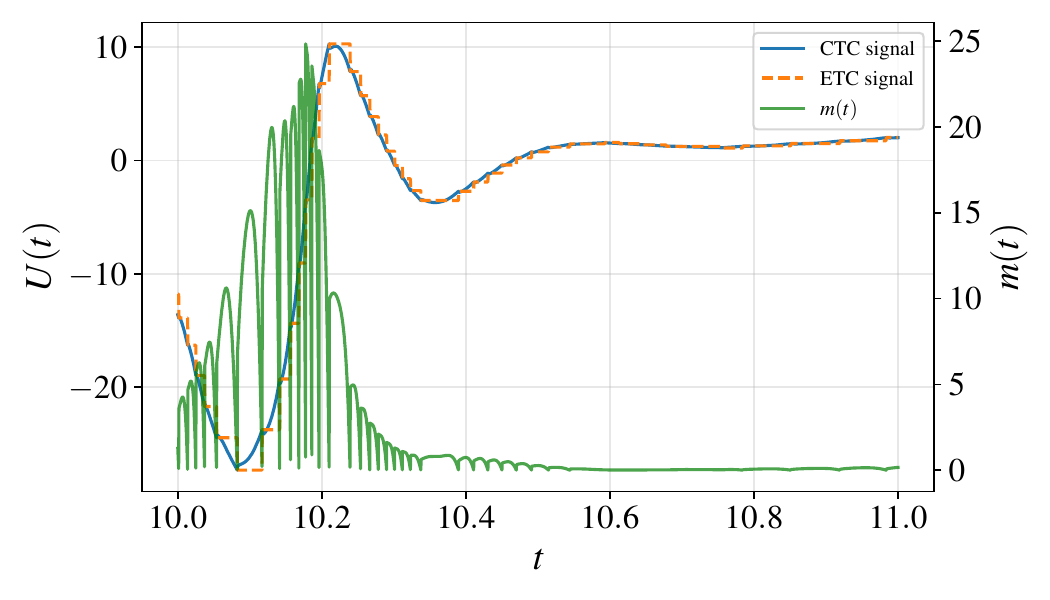}
			\label{figDeepadaptiveETC_control_m_zoom}	
		}
		\caption{Results in closed-loop system with neural operator-based adaptive  ETC.}
	\end{figure}

	\begin{figure}[!t]
		\centering
		\subfloat[Results for $\hat{\lambda}(x,t)$ estimate in closed-loop system with adaptive ETC.]{
			\includegraphics [width=4.1cm] {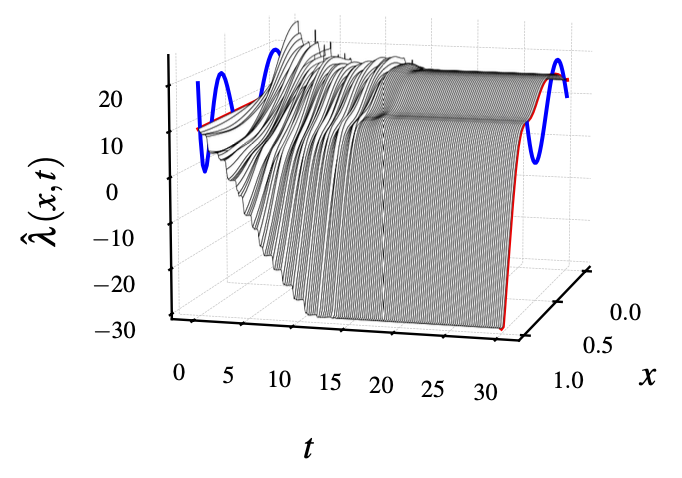}
			\label{figadaptiveETClambda}
		}	
	\subfloat[Results for $\hat{\lambda}(x,t)$ estimate in closed-loop system with  neural operator-based   adaptive ETC.]{
		\includegraphics [width=4.1cm] {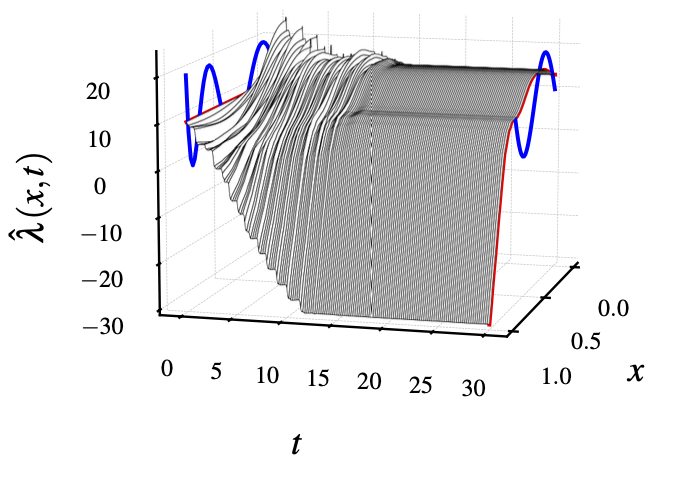}
		\label{figDeepadaptiveETClambda}	
	}	\\
		\subfloat[Results for $\hat{\theta}(t)$ estimate in closed-loop system with adaptive ETC.]{
		\includegraphics [width=4.1cm] {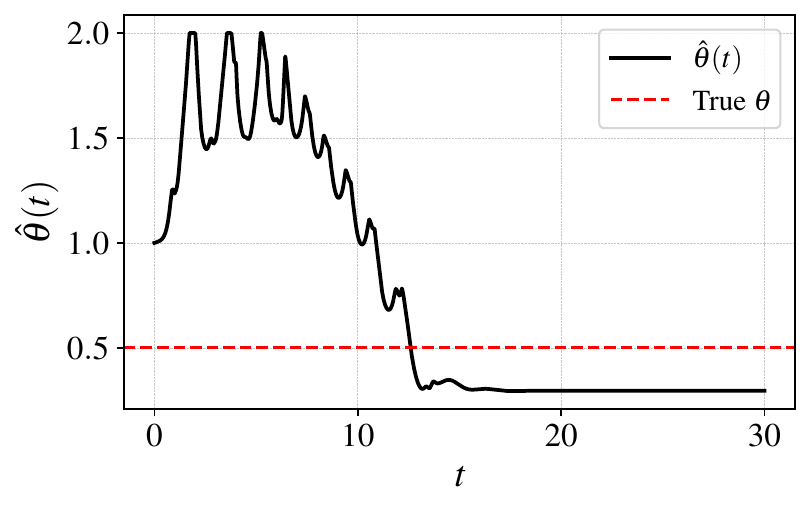}
		\label{figadaptiveETCtheta} }
		\subfloat[Results for $\hat{\theta}(t)$ estimate in closed-loop system with  neural operator-based   adaptive ETC.]{
		\includegraphics [width=4.1cm] {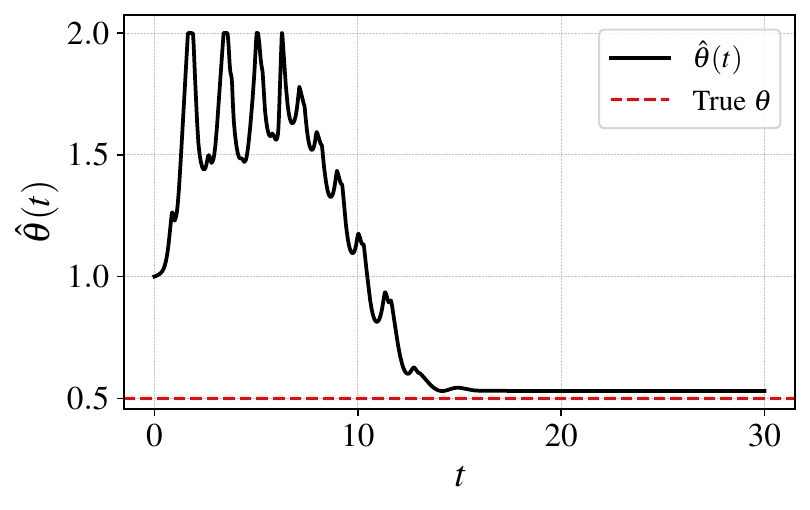}
		\label{figDeepadaptiveETCtheta}	}	
		\caption{Results for parameter estimation.}
	\end{figure}

	\begin{figure}
		\hspace{0.1cm}
		\includegraphics[width=8cm]{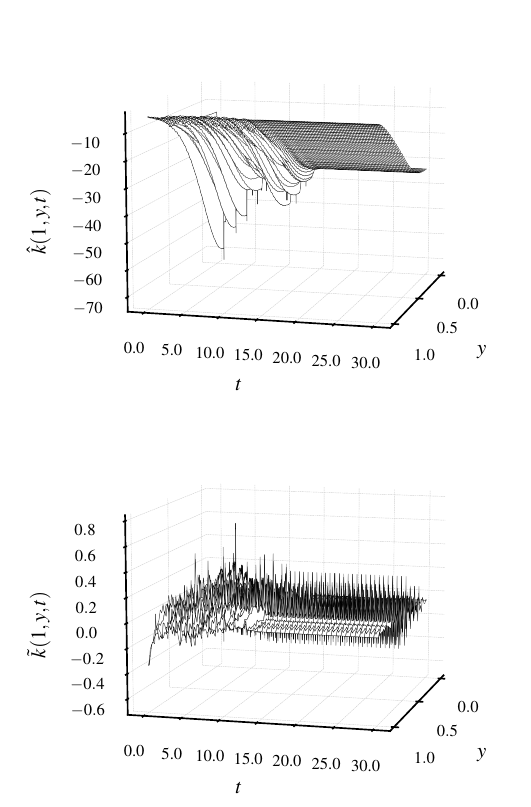} 	
		\caption{Results for $\hat{k}(1,y,t)$ and  $\tilde{k}(1,y,t)$ under neural operator-based adaptive ETC. } 
		\label{figDeepadaptiveETCk}
	\end{figure}

		\begin{table}[t]
		\caption{Calculation times for the finite difference scheme and neural operator averaged over 100 kernel calculations at various spatial resolutions.}
		\label{table_kernel_time}
		\centering
		\footnotesize
		
		\begin{tabular}{cccc}
			\toprule
			Step Size $\Delta x$ & Finite-difference  & DeepONet & Speedup \\
			& calculation time   & calculation time   \\
			& for kernel $k$ ($\mathrm{ms}$) &  for kernel $k$ ($\mathrm{ms}$) \\
			\midrule
			0.04  & 0.53  & 1.10 & $0.48\times$ \\
			0.02  & 2.19  & 0.43 & $5.09\times$ \\
			0.01  & 8.74   & 0.56 & $15.53\times$ \\
			0.005 & 34.87 & 1.97 & $34.87\times$ \\
			\bottomrule
		\end{tabular}

	\end{table}

	\section{Conclusion}\label{sec7}
	This paper presented a switched event-triggered adaptive boundary control framework for reaction-diffusion PDEs with parametric uncertainty. To handle parametric uncertainties, adaptive laws were incorporated, resulting in time-varying gain kernels in the boundary feedback design. To enable efficient implementation, DeepONet-based neural operators were employed to approximate the backstepping kernels, avoiding repeated \textcolor{black}{computation of}   solutions of kernel PDEs. A Lyapunov-based analysis accounting for event-triggering, adaptation, and approximation errors established $L^2$ global asymptotic regulation with arbitrarily small approximation error.

Future work includes the development of an observer-based safe  switched   event-triggered adaptive boundary control  of the proposed framework under actuator constraints as well as its extension to periodic event-triggered and self-triggered control architectures.
	
	\appendix
    	\setcounter{equation}{0}
	\renewcommand{\theequation}{A.\arabic{equation}}
	\subsection{The calculation details in the backstepping transformations \eqref{bw}} \label{Appendix2}
	Differentiating \eqref{bw} with respect to $x$ and $t$, then from \eqref{targ3}, one obtains
	\begin{align}
		&   {w}_{x x}(x,t)-\gamma(x) B(\theta)  {w}(0,t) +  \bigg(-h(x,x)  {w}_{x}(x,t) \notag\\
		&+h_y(x,x)  {w}(x,t)-h_y(x,0)  {w}(0,t)\notag\\
		&-\int_{0}^{x} h_{yy}(x,y)  {w}(y,t)dy\bigg)\notag\\
		&+  w (0,t) \int_{0}^{x}  \gamma(y) B(\theta) h(x,y) dy\notag\\
		&-  \bigg(  {w}_{x x}(x, t)-\frac{d h(x, x)}{d x}  {w}(x, t)-h(x, x)  {w}_x(x, t) \notag\\
		& -h_x(x, x)  {w}(x, t)-\int_0^x h_{x x}(x, y)  {w}(y, t) d y \bigg)  =0 .
	\end{align}
	Differentiating \eqref{bw} with respect to $x$ and $t$, we can also obtain
	\begin{align}
		{\beta}(1,t)= &  {w}(1,t) -\int_{0}^{1} h(1,y)  {w}(y,t)dy ,\label{C.21}\\
		{\beta}(0,t) =& {w}(0,t) , \\
		{\beta}_x(0,t)= &  {w}_x(0,t) -h(0,0)w(0,t), \\
		{\beta}_x(1,t)= & w_x(1,t)-\int_{0}^{1} h_x(1,y) w(y,t) dy\notag\\
		& -h(1,1)w(1,t).
	\end{align}
	Thus, the kernel $h(x,y)$ is found to satisfy \eqref{eq:h1}--\eqref{eq:h4}.
    	\setcounter{equation}{0}
	\renewcommand{\theequation}{B.\arabic{equation}}
	\subsection{Proof of Lemma \ref{boundforkernel_t}} \label{Appendix3}
	According to \eqref{eq:h1}, \eqref{eq:h2} and \eqref{eq:checkh}, $\check h(x,y,t)$ can be written as
	\begin{align}
		\check h(x,y,t)=\phi(x-y,t).\label{eq:hph}
	\end{align}
	Recalling \eqref{eq:h4}, we know $\phi(0,t)=0$.
	It is obtained form \eqref{eq:hph} that 
	\begin{align}
		\check  h_y(x,0,t)=-\phi_x(x,t). \label{eq:hy} 
	\end{align}
	Substituting \eqref{eq:hph}, \eqref{eq:hy} into \eqref{eq:h3} yields
	\begin{align}
		\phi_x(x,t)=g(x,t)-\int_0^x g(y,t)\phi(x-y,t)\,dy,\label{eq:phix}
	\end{align}
	where
	\begin{align}\label{eq:g}
		g(x,t)=\gamma(x;\hat\theta(t))B(\hat\theta(t))
	\end{align}
	is bounded because of \eqref{eq:gamma}, \eqref{eq:B}, and the boundedness of $\hat\theta(t)$. It is thus obtained that $\phi(x,t)$ is bounded by integrating \eqref{eq:phix}, applying Cauthy-Schwarz inequality, and Gr\"onwall's inequality.
	
	Define $\psi(x,t):=\phi_t(x,t)$. Differentiating \eqref{eq:phix} yields
	\begin{align}
		\psi_x(x,t)&=g_t(x,t)
		-\int_0^x g_t(y,t)\phi(x-y,t)dy\notag\\&
		-\int_0^x g(y,t)\psi(x-y,t)dy,\label{eq:psix}
	\end{align}
	with $\psi(0,t)=\phi_t(0,t)=0$.

	According to \eqref{eq:B}, we have
	$\|B_t(\hat\theta(t))\|_{\infty} \le C_B \|\dot{\hat{\theta}}\|_{\infty}.$
	Recalling \eqref{eq:g}, \eqref{eq:gt}, we thus obtain
	$\sup_{x\in[0,1]}\|g_t(x,t)\|
	\le C_g \|\dot{\hat{\theta}}\|_{\infty}$
	for some $C_g>0$.

	Integrating \eqref{eq:psix} and applying Cauthy-Schwarz inequality, we obtain
	\begin{align}
		|\psi(x,t)|
		\le C_1 \|\dot{\hat{\theta}}\|_{\infty}
		+ C_0 \int_0^x |\psi(s,t)|ds,~~x\in[0,1]
	\end{align}
	where $C_0,C_1>0$ are positive constants independent of $t$.
	Applying Gr\"onwall's inequality yields
	$
	\sup_{x\in[0,1]}|\psi(x,t)|
	\le C_2 \|\dot{\hat{\theta}}\|_{\infty}
	$
	for some positive $C_2$.
	It means \eqref{eq88} since $\check h_t(x,y,t)=\psi(x-y,t)$.
 
	\bibliography{reference}
	\bibliographystyle{plain}

\end{document}